\numberwithin{equation}{section}
\newtheorem{Theorem}{Theorem}[section]
\newtheorem{Corollary}[Theorem]{Corollary}
\newtheorem{Proposition}[Theorem]{Proposition}
\begin{document}

\allowdisplaybreaks

\newcommand{\arXivNumber}{1808.03116}

\renewcommand{\PaperNumber}{021}

\FirstPageHeading

\ShortArticleName{Almost Lie Algebroids and Characteristic Classes}

\ArticleName{Almost Lie Algebroids and Characteristic Classes}

\Author{Marcela POPESCU and Paul POPESCU}
\AuthorNameForHeading{M.~Popescu and P.~Popescu}
\Address{University of Craiova, Faculty of Sciences, Department of Applied Mathematics,\\
13, ``Al. I. Cuza'' st., 200585 Craiova, Romania}
\Email{\href{mailto:marcelacpopescu@yahoo.com}{marcelacpopescu@yahoo.com}, \href{mailto:paul_p_popescu@yahoo.com}{paul\_p\_popescu@yahoo.com}}

\ArticleDates{Received August 10, 2018, in final form March 04, 2019; Published online March 20, 2019}

\Abstract{Almost Lie algebroids are generalizations of Lie algebroids, when the Jacobiator is not necessary null. A simple example is given, for which a Lie algebroid bracket or a~Courant bundle is not possible for the given anchor, but a natural extension of the bundle and the new anchor allows a Lie algebroid bracket. A cohomology and related characteristic classes of an almost Lie algebroid are also constructed. We prove that these characteristic classes are all pull-backs of the characteristic classes of the base space, as in the case of a~Lie algebroid.}

\Keywords{almost Lie algebroid; Jacobiator; characteristic classes}

\Classification{53D17; 53C05; 58A99}

\section{Introduction}

The Lie algebroids (as, for example \cite{Mack2}) are generalizations of Lie algebras and integrable regular distributions. In fact a Lie algebroid is an anchored vector bundle (see \cite{PP-A02, PP, PP-A}) with a Lie bracket on the module of sections.

The use of Lie algebroids is very widespread in the literature. It is involved as a geometric settings, generalizing different geometric notions. But some relaxations of the definition of a~Lie algebroid are already studied under different names and different conditions (see~\cite{Gr01} for an up-to-day review).

The Jacobiator of the bracket has different forms, according to the setting (for example, for manifolds or supermanifolds, for skew symmetric or general brackets etc.) The Jacobi identity (i.e., the vanishing Jacobiator) is an essential condition for a Lie algebroid~\cite{DW}.

The use of relaxation conditions of Lie algebroids give a simple and elegant way to describe some mechanical systems. For example, linear almost Poisson structures~\cite{LMD}, quasi-Lie algebroids~\cite{GLMD}, or algebroids~\cite{GG, GJ}, but the list can be easily extended, most due to the same authors.

According also to \cite{LMD, GG, GLMD}, the interest in almost Lie algebroids comes also from that they are involved in nonholonomic geometry.

The aim of this paper is to investigate some properties of an almost Lie algebroid, i.e., an anchored vector bundle with an a corresponding almost Lie bracket. For sake of simplicity we consider the almost Lie case, but some
almost Lie conditions can be removed. Some abstract properties are given explicitly in some simple and concrete examples.

Specifically, we consider a trivial vector bundle $\pi _{E_{0}}\colon E_{0}\rightarrow {\mathbb R}^{2}$ having a four-dimensional fibre and we construct:
\begin{itemize}\itemsep=0pt
\item a specific anchor $\rho _{0}\colon E_{0}\rightarrow T{\mathbb R}^{2}$ (see formulas~(\ref{Anch01})) such that its image is surjective, except the origin, where its image is null and

\item a corresponding almost Lie bracket (see formula~(\ref{Brack01})) that gives an almost Lie algebroid structure on~$E_{0}$.
\end{itemize}

We prove (Theorem \ref{Th01-alg}) that there is no skew symmetric bracket to give a Lie algebroid structure on~$E_{0}$. It is interesting that if we remove the origin of the base, then the resulting vector bundle $E_{0}^{\prime }$ has a Lie algebroid bracket. But there is a subalgebroid $E_{0}^{\prime \prime }\subset E_{0}$ that is a~Lie algebroid that generate the same (generalized) distribution of $T{\mathbb R}^{2}$ as $E_{0}$, described above. Notice that the orbit set of the almost Lie algebroid $E_{0}$ has two elements: the origin (singular) and the rest of~${\mathbb R}^{2}$ (regular).

The derived anchored bundle constructed in~\cite{PP-A}, used for $E_{0}$, gives a Lie algebroid structure (Theorem~\ref{ThAlg02}). The proof of this result needs some long computations, based on some auxiliary and technical
results proved in Proposition~\ref{Prhelp01}.

The interest in $E_{0}$ supporting an almost Lie algebroid structure comes not only from the fact that it has no Lie algebroid structure, but also from the fact that it can not be a Courant vector bundle (Proposition~\ref{prCourant}). But according to the definitions related to the almost complex Lie algebroid case in~\cite{IP, PP-C}, $E_{0}$ has an almost complex endomorphism that has a null Nijenhuis tensor, i.e., it is integrable (Proposition~\ref{prComplex}).

Notice that the existence of a Lie algebroid bracket for a transitive Lie algebroid has obstructions (see, for example, \cite{Ku, Mack2,MiLe}).

The cohomology of a Lie algebroid was defined in \cite{MR} as the cohomology of an associated differential complex (see also~\cite{Mack2}). The characteristic classes of a Lie algebroid are constructed in~\cite{Fe01},
where there is proved that they are the pull back of the characteristic classes of the base manifold.

An interesting problem related to almost Lie algebroids comes to be a construction of a~cohomology theory and then some characteristic classes, related to this. The reasons come true as those in the Lie algebroid case.

In the last section we construct the cohomology complex of an almost Lie algebroid. In spite of the fact the complex is a quotient set, we use definitions adapted to corresponding sets. Using an analogous way as in \cite {Hu} to construct characteristic classes in the classical case, we construct the characteristic classes of an almost Lie algebroid. We prove an analogous result as in~\cite{Fe01}, i.e., the characteristic classes of a skew
algebroid are the pull back of the characteristic classes of the base manifold (Theorem~\ref{thChC}).

A construction of a cohomology theory for almost lie algebroids is given also in~\cite{GX}, using a~supergeometry setting. The calculations with cohomology classes in a such context are technically quite difficult, such
as to construct characteristic classes following a classical way. The main difficulty to handle classical tools in this case is that some objects can not be used related to some sections of vector bundles.

In our paper we consider a different approach. The cohomology classes are obtained in a more simply form, from a technical point of view. The differential complex elements are equivalence classes of antisymmetric forms of the vector bundle support of the almost Lie structure. For the sake of simplicity of our constructions (cohomology classes and then characteristic classes), we do not use the language of sheaf theory, nor any elaborated algebraic form of the differential complexes as equivalence classes.

Given the above, we think it is an interesting problem, but not a simple one, to relate the cohomology defined in the paper with that considered in~\cite{GX}.

\section{Skew symmetric algebroids}

Let $\pi _{E}\colon E\rightarrow M$ be a (smooth) vector bundle. An \emph{anchor} on $E$ is a vector bundle map $\rho \colon E\rightarrow TM$, where $\pi _{TM}\colon TM\rightarrow M$ is the tangent vector bundle of $M$; equivalently, an $\mathcal{F}(M)$-linear $\rho \colon \Gamma (E) \rightarrow \Gamma (TM) \overset{\rm not.}{=}{}\mathcal{X}(M) $. A \emph{skew symmetric bracket} on $E$ is a map $[\cdot ,\cdot ]_{E}\colon \Gamma
(E) \times \Gamma (E) \overset{\rm not.}{=}{}\Gamma (E) ^{2}\rightarrow \Gamma (E) $, such that $[Y,X]_{E}=-[X,Y]_{E}$, $[X,fY]_{E}=\rho (X) (f) Y+f[X,Y]_{E}$ and $\rho ( \lbrack X,Y]_{E}) =[\rho (X) ,\rho (Y) ]$, $\forall\, X,Y\in \Gamma (E) $ and $f\in \mathcal{F}(M)$. The tensor map $\mathcal{J}_{E}\colon \Gamma (E) ^{3}\rightarrow \Gamma (E) $, $\mathcal{J}_{E}(X,Y,Z) =\sum_{\rm circ}[X,[Y,Z]_{E}]_{E}$ is called the \emph{Jacobiator map} of the bracket. An almost Lie algebroid is a \emph{Lie algebroid} provided that the Jacobiator of the bracket vanishes.

If $\pi _{A}\colon A\rightarrow M$ is a vector bundle over $M$, then a \emph{linear} \emph{$E$-connection} on $A$ is a map $\nabla \colon \Gamma (E) \times \Gamma (A) \rightarrow \Gamma (A) $ that verify Koszul conditions: $\nabla _{fX}s=f\nabla _{X}s$, $\nabla _{(X+X^{\prime })}s=\nabla _{X}s+\nabla _{X^{\prime }}s$, $\nabla _{X}(fs) =\rho (X) (f) s+f\nabla _{X}s$, $\nabla_{X}( s+s^{\prime }) =\nabla _{X}s+\nabla _{X}s^{\prime }$, $\forall\, X,X^{\prime }\in \Gamma (E) $, $s,s^{\prime }\in \Gamma (A) $, $f\in \mathcal{F}(M)$. Its curvature is the tensor $R\colon \Gamma (E) ^{2}\times \Gamma (A) \rightarrow\Gamma (A) $, given by the formula
\begin{gather*}
R(X,Y) s=\nabla _{X}\nabla _{Y}s-\nabla _{Y}\nabla _{X}s-\nabla _{\lbrack X,Y]_{E}}s,
\end{gather*}
$\forall\, X,Y\in \Gamma (E) $ and $s\in \Gamma (A) $.

In particular, for an $E$-connection $\nabla $ on $E$ we can consider its torsion $T\colon \Gamma (E) ^{2}\rightarrow \Gamma (E) $, given by formula
\begin{gather*}
T(X,Y) =\nabla _{X}Y-\nabla _{Y}X-[X,Y]_{E}
\end{gather*}
and we denote its curvature
\begin{gather*}
R(X,Y) Z=\nabla _{X}\nabla _{Y}Z-\nabla _{Y}\nabla _{X}Z-\nabla_{\lbrack X,Y]_{E}}Z\overset{\rm not.}{=}{}\nabla _{X\wedge Y}Z,
\end{gather*}
$\forall\, X,Y,Z\in \Gamma (E)$.

The following result follows by straightforward computations.

\begin{Proposition}[first Bianchi identities] Let $\nabla $ be an $E$-connection on $E$ with torsion $T$ and curvature $R$. We have
\begin{gather*}
\sum_{\rm circ}R(X,Y) Z=\sum_{\rm circ} ( \nabla _{X}T )(Y,Z) +\sum_{\rm circ}T( T(X,Y) ,Z) +\mathcal{J}_{E}( X,Y,Z) .
\end{gather*}
\end{Proposition}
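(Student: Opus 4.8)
The plan is to reduce everything to the definitions of $R$, $T$ and the bracket, and then to reorganize the triple cyclic sum until the three terms on the right-hand side emerge. The one algebraic identity I will use repeatedly is the rearranged definition of torsion,
\begin{gather*}
\nabla _{A}B-\nabla _{B}A=T(A,B) +[A,B]_{E},
\end{gather*}
valid for all $A,B\in \Gamma (E) $; everything else is bookkeeping based on the skew-symmetry $T(A,B) =-T(B,A) $.

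First I would expand $\sum_{\rm circ}R(X,Y) Z=\sum_{\rm circ}( \nabla _{X}\nabla _{Y}Z-\nabla _{Y}\nabla _{X}Z-\nabla _{[X,Y]_{E}}Z) $ and regroup the second-order terms cyclically as $\sum_{\rm circ}\nabla _{X}( \nabla _{Y}Z-\nabla _{Z}Y) $. Applying the torsion identity inside the parenthesis produces $\sum_{\rm circ}\nabla _{X}( T(Y,Z) +[Y,Z]_{E}) $, so that
\begin{gather*}
\sum_{\rm circ}R(X,Y) Z=\sum_{\rm circ}\nabla _{X}( T(Y,Z)) +\sum_{\rm circ}\big( \nabla _{X}[Y,Z]_{E}-\nabla _{[X,Y]_{E}}Z\big) .
\end{gather*}
The crucial step is the second sum: after a cyclic reindexing it pairs each $\nabla _{X}[Y,Z]_{E}$ with a term $-\nabla _{[Y,Z]_{E}}X$, and applying the torsion identity once more to the pair $( X,[Y,Z]_{E}) $ turns it into $\sum_{\rm circ}T( X,[Y,Z]_{E}) +\sum_{\rm circ}[X,[Y,Z]_{E}]_{E}$. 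By definition the last sum is exactly the Jacobiator $\mathcal{J}_{E}(X,Y,Z) $. This is where the Jacobiator term is produced, and it is the only place the possible failure of the Jacobi identity enters the computation.

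It then remains to match $\sum_{\rm circ}\nabla _{X}( T(Y,Z)) +\sum_{\rm circ}T( X,[Y,Z]_{E}) $ with $\sum_{\rm circ}( \nabla _{X}T)(Y,Z) +\sum_{\rm circ}T( T(X,Y),Z) $. For this I would substitute $\nabla _{X}( T(Y,Z)) =( \nabla _{X}T)(Y,Z) +T( \nabla _{X}Y,Z) +T( Y,\nabla _{X}Z) $ and expand $[Y,Z]_{E}=\nabla _{Y}Z-\nabla _{Z}Y-T(Y,Z) $ inside $T( X,[Y,Z]_{E}) $. The terms of the form $T( \cdot ,T( \cdot ,\cdot )) $ reassemble, via the skew-symmetry of $T$, into $\sum_{\rm circ}T( T(X,Y),Z) $, while the remaining single-$\nabla$ terms cancel pairwise by the same skew-symmetry (for instance $T( \nabla _{X}Y,Z) $ cancels against $T( Z,\nabla _{X}Y) $).

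I do not expect a genuine conceptual obstacle: the statement is purely algebraic and, as the text remarks, follows by direct computation. The only thing requiring care is the sign and index bookkeeping across the three cyclic sums, together with a preliminary check that both sides are genuinely $\mathcal{F}(M) $-tensorial in $X,Y,Z$, so that the identity is a bona fide pointwise relation between tensors rather than merely a relation among chosen sections.
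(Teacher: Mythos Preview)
Your argument is correct and is precisely the ``straightforward computation'' the paper alludes to without giving details; the cyclic regrouping of the second-order terms, the two applications of $\nabla_A B-\nabla_B A=T(A,B)+[A,B]_E$, and the cancellation of the single-$\nabla$ residues via the skew-symmetry of $T$ constitute the full proof. The only remark is that your final tensoriality check is not needed as a separate step, since in the almost Lie setting the Jacobiator and the curvature are already $\mathcal{F}(M)$-linear, so the identity is automatically tensorial once the section-level equality is established.
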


An immediate corollary is the following result.

\begin{Corollary}Consider an almost Lie algebroid $E\rightarrow M$.
\begin{enumerate}\itemsep=0pt
\item[$1.$] If there are locally $E$-connections on $E$\ with vanishing torsion and curvature, then $E$ is a~Lie algebroid, i.e., the bracket of $E$\ has a null Jacobiator. Or, equivalently:
\item[$2.$] If the bracket of $E$\ has a non-null Jacobiator on an open $U\subset M $, then there is no local $E$-connection on $E_{U}$\ with vanishing torsion and curvature.
\end{enumerate}
\end{Corollary}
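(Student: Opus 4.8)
The plan is to read off both statements directly from the first Bianchi identity of the preceding Proposition, exploiting the tensorial character of the Jacobiator. First I would observe that if a local $E$-connection $\nabla$ on $E_U$ has vanishing torsion $T\equiv 0$ and vanishing curvature $R\equiv 0$, then every term of the Bianchi identity except $\mathcal{J}_E$ drops out: the left-hand side $\sum_{\rm circ}R(X,Y)Z$ vanishes because $R\equiv 0$, while on the right-hand side both $\sum_{\rm circ}(\nabla_X T)(Y,Z)$ and $\sum_{\rm circ}T(T(X,Y),Z)$ vanish because $T\equiv 0$ (and hence $\nabla_X T\equiv 0$ as well). What remains is the identity $0=\mathcal{J}_E(X,Y,Z)$ for all $X,Y,Z\in\Gamma(E_U)$, so the Jacobiator vanishes identically on $U$.

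For part~$1$, I would cover $M$ by the opens $U_\alpha$ on which the hypothesized local connections are assumed to exist. By the computation above, $\mathcal{J}_E$ vanishes on each $U_\alpha$. Since $\mathcal{J}_E$ is a \emph{tensor} (it is $\mathcal{F}(M)$-multilinear in its three arguments, as recorded in the definition of the Jacobiator map), its vanishing is a pointwise condition; hence vanishing on every member of an open cover forces $\mathcal{J}_E\equiv 0$ on all of $M$. By definition this says the bracket satisfies the Jacobi identity, i.e., $E$ is a Lie algebroid. For part~$2$, I would simply take the contrapositive of the local assertion: if $\mathcal{J}_E$ is non-null somewhere on an open $U$, then no $E$-connection on $E_U$ can have both $T\equiv 0$ and $R\equiv 0$, since the collapse of the Bianchi identity described above would otherwise force $\mathcal{J}_E\equiv 0$ on $U$, contradicting the assumption.

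There is essentially no serious obstacle here, which is exactly why the authors call it an immediate corollary: the entire content is packaged in the Bianchi identity, and the flatness-plus-torsion-free hypothesis annihilates all terms except the Jacobiator. The only point deserving a word of care is the local-to-global passage in part~$1$, which is legitimate precisely because $\mathcal{J}_E$ is tensorial rather than merely a differential operator, so that pointwise vanishing on a cover already yields global vanishing.
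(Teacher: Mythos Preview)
Your proposal is correct and matches the paper's intent exactly: the paper gives no explicit proof but simply labels the statement ``an immediate corollary'' of the first Bianchi identity, which is precisely the mechanism you spell out. Your additional remark about tensoriality of $\mathcal{J}_E$ for the local-to-global step is in line with the paper's own observation, made just after the corollary, that the Jacobiator is a tensor in the almost Lie algebroid setting.
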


Notice that the Jacobiator and the curvature are tensors only in the case when $E$ is an algebroid, thus the vanishing conditions above have sense only in this setting.

Let us consider the following anchored vector bundle $E_{0}\rightarrow M$ on the base $M={\mathbb R}^{2}$, where $E_{0}={\mathbb R}^{2}\times \mathcal{M}_{2} ({\mathbb R}) $ and the anchor defined as follows. In every point $\bar{x}=\big( x^{1},x^{2}\big) $:
\begin{alignat}{3}
& \rho \big( X_{1}^{1}\big) =\big( x^{1}\big) ^{2}\dfrac{\partial }{\partial x^{1}}, \qquad && \rho \big( X_{2}^{1}\big) =\big( x^{1}\big) ^{2}\dfrac{\partial }{\partial x^{2}}, & \nonumber\\
& \rho \big( X_{1}^{2}\big) =\big( x^{2}\big) ^{2}\dfrac{\partial }{\partial x^{1}},\qquad && \rho \big( X_{2}^{2}\big) =\big( x^{2}\big) ^{2}\dfrac{\partial }{\partial x^{2}}, & \label{Anch01}
\end{alignat}
where $X_{j}^{i}$ is the matrix with null entries, except $1$ on the position $(i,j)$, $\forall\, i,j\in \{ 1,2\} $. It is easy to see that the image by $\rho $ of sections of $E_{0}$ generates the whole tangent space $T_{\bar{x}}{\mathbb R}^{2}$ for $\bar{x}\neq \bar{0}$ and $\{\bar{0}\}\in T_{\bar{0}}{\mathbb R}^{2}$ for $\bar{x}= \bar{0}=(0,0)$. A section on $E_{0}$ is in $\ker \rho $ iff it is an $\mathcal{F}\big({\mathbb R}^{2}\big)$-combination of sections $\mathcal{X}_{1}=\big( x^{2}\big) ^{2}X_{1}^{1}-\big( x^{1}\big) ^{2}X_{1}^{2}$ and $\mathcal{X}_{2}=\big(x^{2}\big) ^{2}X_{2}^{1}-\big(x^{1}\big) ^{2}X_{2}^{2}$. We notice that these two sections do not generate a regular vector subbundle of~$E_{0}$. A general and more compact form, useful to extend in higher dimensions, is $\rho \big(X_{i}^{j}\big) =\big( x^{j}\big) ^{2}\frac{\partial }{\partial x^{j}}$.

In the sequel we consider some important examples using $E_{0}$.

There is a bracket that gives a skew algebroid structure on $E_{0}$, given
by
\begin{gather*}
\big[X_{1}^{1},X_{2}^{1}\big]_{E}=2x^{2}X_{2}^{1}, \qquad \big[X_{1}^{1},X_{1}^{2}\big]_{E_{0}}=-2x^{1}X_{1}^{2}, \qquad \big[X_{1}^{1},X_{2}^{2}\big]_{E_{0}}=0, \\
\big[X_{2}^{1},X_{1}^{2}\big]_{E_{0}}=2x^{2}X_{1}^{1}-2x^{1}X_{2}^{2}, \qquad \big[X_{2}^{1},X_{2}^{2}\big]_{E_{0}}=2x^{2}X_{2}^{1}, \qquad \big[X_{1}^{2},X_{2}^{2}\big]_{E_{0}}=-2x^{2}X_{1}^{2}.
\end{gather*} In a compact form
\begin{gather}
\big[ X_{j}^{i},X_{l}^{k}\big]_{E_{0}}=2x^{k}\delta_{j}^{k}X_{l}^{i}-2x^{i}\delta _{l}^{i}X_{j}^{k}. \label{Brack01}
\end{gather}
The Jacobiator is
\begin{gather*}
J\big( X_{1}^{1},X_{2}^{1},X_{2}^{2}\big) =J\big( X_{1}^{1},X_{1}^{2},X_{2}^{2}\big) =0, \qquad J_{2}F_{2}\big(X_{1}^{1},X_{2}^{1},X_{1}^{2}\big) =-2\mathcal{X}_{2},\\
J\big(X_{2}^{1},X_{1}^{2},X_{2}^{2}\big) =-2\mathcal{X}_{1}.
\end{gather*}

By a direct computation, we get
\begin{gather*}
\big[\mathcal{X}_{1},X_{1}^{1}\big]_{E_{0}}=\big[\mathcal{X}_{1},X_{1}^{2}\big]_{E_{0}}=\big[\mathcal{X}_{2},X_{2}^{1}\big]_{E_{0}}=\big[\mathcal{X}_{2},X_{2}^{2}\big]_{E_{0}}=0,\qquad
\big[\mathcal{X}_{1},X_{2}^{2}\big]_{E_{0}}=-2x^{2}\mathcal{X}_{1},\\
\big[\mathcal{X}_{1},X_{2}^{1}\big]_{E_{0}}=2x^{1}\mathcal{X}_{2}, \qquad \big[\mathcal{X}_{2},X_{1}^{1}\big]_{E_{0}}=-2x^{1}\mathcal{X}_{2}, \qquad \big[\mathcal{X}_{2},X_{1}^{2}\big]_{E_{0}}=2x^{2}\mathcal{X}_{1}.
\end{gather*}

Let us consider another bracket $[\cdot ,\cdot ]_{E_{0}}^{\prime }=[\cdot ,\cdot ]_{E_{0}}+B$, i.e., $[X,Y]_{E_{0}}^{\prime }=[X,Y]_{E_{0}}+B(X,Y)$, $\forall\, X,Y\in \Gamma (E_{0}) $, where $B\colon \Gamma (E_{0}) ^{2}\longrightarrow \Gamma (E_{0}) $ is an $\mathcal{F}\big({\mathbb R}^{2}\big)$-linear and sqew-symmetric map. If the bracket $[\cdot ,\cdot ]_{E_{0}}^{\prime }$ gives an algebroid, then $\rho (\lbrack X,Y]_{E_{0}}^{\prime }) =[\rho (X) ,\rho (Y) ]$. It follows that \smash{$\rho( B(X,Y))=0$}, thus $B(X,Y) =B^{1}(X,Y) \mathcal{X}_{1}+B^{2}(X,Y) \mathcal{X}_{2}$ defines two bilinear forms~$B^{1}$ and~$B^{2}$.

Let us denote by $J$ and $J^{\prime }$ the Jacobiators of the brackets $[\cdot ,\cdot ]_{E_{0}}$ and $[\cdot ,\cdot ]_{E_{0}}^{\prime }$ respectively. By a straightforward computations, we have $J^{\prime }(X,Y,Z) =J(X,Y,Z) +\mathcal{B}( X,Y,Z) $, where
\begin{gather*}
\mathcal{B}( X,Y,Z) =\sum\limits_{{\rm cicl.} \, X,Y,Z}\big([B(X,Y),Z]_{E_{0}}+B( [X,Y]_{E_{0}},Z) +B( B(X,Y),Z) \big).
\end{gather*}

Since $J\big( X_{1}^{1},X_{2}^{1},X_{2}^{2}\big) =J\big(X_{1}^{1},X_{1}^{2},X_{2}^{2}\big) =0$, there are induced Lie algebroids structures on the subbundles $E_{01}$ and $E_{02}$, generated by $\big\{X_{1}^{1},X_{2}^{1},X_{2}^{2}\big\}$ and by $\big\{X_{1}^{1},X_{1}^{2},X_{2}^{2}\big\}$ respectively.

Let us prove now that it is not possible to have
\begin{gather*}
B\big( X_{1}^{1},X_{2}^{1}\big) =B\big( X_{1}^{1},X_{1}^{2}\big) =B\big( X_{1}^{1},X_{2}^{2}\big) =B\big( X_{2}^{2},X_{2}^{1}\big) =B\big( X_{2}^{2},X_{1}^{2}\big) =0,
\end{gather*}
but $B\big( X_{2}^{1},X_{1}^{2}\big) =B_{1}\mathcal{X}_{1}+B_{2}\mathcal{X}_{2}$.

Indeed, by a direct computation, we get
\begin{gather*}
\mathcal{B}\big(X_{1}^{1},X_{2}^{1},X_{1}^{2}\big) =-\big( x^{1}\big) ^{2}B_{1,1}\mathcal{X}_{1}-\big( x^{1}\big) ^{2}B_{2,1}\mathcal{X}_{2}
\end{gather*} and
\begin{gather*}
\mathcal{B}\big( X_{2}^{1},X_{1}^{2},X_{2}^{2}\big) =-\big( x^{2}\big) ^{2}B_{1,2}\mathcal{X}_{1}-\big( x^{2}\big) ^{2}B_{2,2}\mathcal{X}_{2}.
\end{gather*}
Thus, in order that $J^{\prime }=0$, we must have $-\big( x^{1}\big) ^{2}B_{2,1}-2=0$, but this is not possible, for example for $x^{1}=0$. A~similar argument can be used to prove the following result.

\begin{Theorem}\label{Th01-alg}There is no Lie algebroid bracket corresponding to the anchor $\rho $.
\end{Theorem}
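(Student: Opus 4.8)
The plan is to show that no skew-symmetric bracket $[\cdot,\cdot]'_{E_0}$ compatible with the anchor $\rho$ can have vanishing Jacobiator. Any such bracket differs from the given almost Lie bracket $[\cdot,\cdot]_{E_0}$ by an $\mathcal{F}(\mathbb{R}^2)$-bilinear skew-symmetric map $B$, and the anchor-compatibility condition $\rho([X,Y]'_{E_0})=[\rho(X),\rho(Y)]$ forces $\rho(B(X,Y))=0$, so that $B$ takes values in $\ker\rho$, i.e., $B(X,Y)=B^1(X,Y)\mathcal{X}_1+B^2(X,Y)\mathcal{X}_2$. The comparison formula $J'=J+\mathcal{B}$ derived in the excerpt then reduces the problem to a purely algebraic obstruction: I must show that the system of equations $\mathcal{B}(X,Y,Z)=-J(X,Y,Z)$, as $B$ ranges over all such skew forms, has no solution.

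First I would evaluate $J$ on all the relevant triples of basis sections $X_j^i$, using the values already recorded in the excerpt; the only nonzero Jacobiators are $J(X_1^1,X_2^1,X_1^2)=-2\mathcal{X}_2$ and $J(X_2^1,X_1^2,X_2^2)=-2\mathcal{X}_1$. Next I would write $B$ in full generality as a collection of coefficient functions $B_k(X_j^i,X_l^m)\in\mathcal{F}(\mathbb{R}^2)$ on each pair of basis elements, and compute $\mathcal{B}(X,Y,Z)$ on the two critical triples. Using the explicit brackets $[\mathcal{X}_a,X_j^i]_{E_0}$ listed in the excerpt (which control the term $[B(X,Y),Z]_{E_0}$) together with the structure constants of $[\cdot,\cdot]_{E_0}$, each $\mathcal{B}$ expands into an expression of the form $(\text{something})\cdot\mathcal{X}_1+(\text{something})\cdot\mathcal{X}_2$ whose coefficients are linear combinations of the $B_k$ and their products, weighted by the factors $(x^1)^2$, $(x^2)^2$, $x^1$, $x^2$.

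The heart of the argument is to extract the obstruction term that does not depend on $B$. Following the special case already worked out in the excerpt, where the choice with only $B(X_2^1,X_1^2)\neq 0$ produced the equation $-(x^1)^2 B_{2,1}-2=0$, the constant $-2$ comes directly from $J$ and cannot be cancelled: every $B$-dependent contribution carries a factor of $(x^1)^2$ or $(x^2)^2$, which vanishes on the coordinate axes, whereas the term inherited from $J$ is a nonzero constant there. I would make this precise by evaluating the condition $J'=0$ at a point with $x^1=0$ (for the triple giving $\mathcal{X}_2$) and a point with $x^2=0$ (for the triple giving $\mathcal{X}_1$): at such points all $B$-contributions drop out and one is left with $-2=0$, a contradiction.

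The main obstacle is organizational rather than conceptual: I must verify that \emph{every} term appearing in $\mathcal{B}$ on the two critical triples genuinely acquires a factor $(x^i)^2$ that vanishes on the appropriate axis, so that no combination of the $B$ coefficients can absorb the constant. This requires care because $\mathcal{B}$ contains the quadratic term $B(B(X,Y),Z)$, and one must confirm that the brackets $[\mathcal{X}_a,X_j^i]_{E_0}$ all carry explicit factors $x^1$ or $x^2$ (as the listed formulas show) so that the linear-in-$B$ terms vanish on the axes as well. Once this vanishing is established uniformly, the constant $-2$ from $J$ stands alone and the contradiction is immediate, proving that no Lie algebroid bracket exists for the anchor $\rho$.
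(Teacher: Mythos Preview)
Your approach is the same as the paper's, and the structure of the argument is right: write an arbitrary compatible bracket as $[\cdot,\cdot]_{E_0}+B$ with $B$ valued in $\operatorname{span}_{\mathcal F}\{\mathcal X_1,\mathcal X_2\}$, compare Jacobiators via $J'=J+\mathcal B$, and show that the $\mathcal X_2$-coefficient of $\mathcal B(X_1^1,X_2^1,X_1^2)$ can never equal the constant~$2$.

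There is, however, a concrete gap in the last step. It is \emph{not} true that evaluating on a single axis kills every $B$-dependent contribution. For instance, in $[B(X_1^1,X_2^1),X_1^2]_{E_0}$ the Leibniz rule produces $-\rho(X_1^2)(C_2)\,\mathcal X_2=-(x^2)^2\,\partial_{x^1}C_2\,\mathcal X_2$, which does not vanish on $\{x^1=0\}$; the quadratic terms $B(B(\cdot,\cdot),\cdot)$ likewise pick up $(x^2)^2$ factors from the very definition $\mathcal X_a=(x^2)^2X_a^1-(x^1)^2X_a^2$. What one actually obtains (and what the paper states) is that the $\mathcal X_2$-coefficient of $J'(X_1^1,X_2^1,X_1^2)$ has the form $x^1 f(x^1,x^2)+x^2 g(x^1,x^2)-2$ for smooth $f,g$; the contradiction is therefore at the \emph{origin}, where both $x^1$ and $x^2$ vanish, not on a single coordinate axis.

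One further point worth making explicit: since $\mathcal X_1$ and $\mathcal X_2$ themselves vanish at $(0,0)$, the passage from ``$c_1\mathcal X_1+c_2\mathcal X_2=0$ as a section'' to ``$c_1=c_2=0$ as smooth functions'' is not automatic at the origin; it follows because $\mathcal X_1,\mathcal X_2$ are pointwise independent on the dense open set $\mathbb R^2\setminus\{0\}$ and the coefficients are smooth. With the evaluation point corrected to the origin and this remark added, your argument coincides with the paper's.
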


\begin{proof} Let us suppose that
\begin{gather*}
B\big( X_{1}^{1},X_{2}^{2}\big) =A_{1} \mathcal{X}_{1}+A_{2}\mathcal{X}_{2}, \qquad B\big( X_{1}^{1},X_{1}^{2}\big) =B_{1}\mathcal{X}_{1}+B_{2}\mathcal{X}_{2}, \\
B\big(X_{1}^{1},X_{2}^{1}\big) =C_{1}\mathcal{X}_{1}+C_{2}\mathcal{X}_{2}, \qquad B\big( X_{2}^{2},X_{2}^{1}\big) =D_{1}\mathcal{X}_{1}+D_{2}\mathcal{X}_{2} , \\
B\big( X_{2}^{2},X_{1}^{2}\big) =E_{1}\mathcal{X}_{1}+E_{2}\mathcal{X}_{2}, \qquad B\big( X_{2}^{1},X_{1}^{2}\big) =F_{1}\mathcal{X}_{1}+F_{2}\mathcal{X}_{2}.
\end{gather*}

By a direct computation, we get from
\begin{gather*}
J^{\prime }\big(X_{1}^{1},X_{2}^{e1},X_{1}^{2}\big) =J\big( X_{1}^{1},X_{2}^{1},X_{1}^{2}\big) +\mathcal{B}\big(X_{1}^{1},X_{2}^{1},X_{1}^{2}\big) =0,
\end{gather*} that there are smooth functions $f,g\colon {\mathbb R}^{2}\rightarrow {\mathbb R}$ such that $x^{1}f\big( x^{1},x^{2}\big) +x^{2}g\big( x^{1},x^{2}\big) -2=0$, but this is not possible, for
example for $x^{1}=0$ and $x^{2}=0$.
\end{proof}

\begin{Corollary}There is no $E_{0}$-linear connection on $E_{0}$ with simultaneously vanishing torsion and curvature.
\end{Corollary}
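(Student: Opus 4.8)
The plan is to derive a contradiction from the non-vanishing of the Jacobiator already exhibited for the bracket $[\cdot ,\cdot ]_{E_0}$, using the first Bianchi identities; this is exactly the content of part~2 of the Corollary following the Bianchi Proposition, specialized to $E_0$. So the whole argument is really a direct instantiation of an already-established general fact.

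First I would suppose, for contradiction, that there exists an $E_0$-connection $\nabla $ on $E_0$ whose torsion $T$ and curvature $R$ both vanish identically. Since $T$ and $R$ are, by definition, computed with respect to the given skew-symmetric bracket $[\cdot ,\cdot ]_{E_0}$, I can substitute $T\equiv 0$ and $R\equiv 0$ into the first Bianchi identity of the Proposition. The left-hand side $\sum_{\rm circ}R(X,Y)Z$ vanishes because $R\equiv 0$, while every term on the right-hand side except the Jacobiator is built from $T$ (either through $(\nabla _X T)(Y,Z)$ or through $T(T(X,Y),Z)$) and hence also vanishes. The identity therefore collapses to $\mathcal{J}_{E_0}(X,Y,Z)=0$ for all $X,Y,Z\in \Gamma (E_0)$.

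Second, I would contradict this using the explicit Jacobiator values already recorded, for instance $J\big(X_1^1,X_2^1,X_1^2\big)=-2\mathcal{X}_2$. Since $\mathcal{X}_2=\big(x^2\big)^2 X_2^1-\big(x^1\big)^2 X_2^2$ is a genuinely nonzero section of $E_0$ (it vanishes only at the origin, being nonzero on the dense open set ${\mathbb R}^2\setminus \{\bar 0\}$), the Jacobiator is not identically zero, which contradicts the conclusion forced by the Bianchi identity. Hence no global flat torsion-free $E_0$-connection can exist. Equivalently, taking $U={\mathbb R}^2\setminus \{\bar 0\}$, on which $J\neq 0$, part~2 of the Corollary to the Bianchi identities already excludes even a local $E_0$-connection on $(E_0)_U$ with vanishing torsion and curvature, and a global flat torsion-free connection would restrict to such a local one.

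I do not anticipate any real obstacle: the reasoning is a direct specialization of the preceding Corollary, and the only extra input is the elementary observation that the computed Jacobiator $-2\mathcal{X}_2$ is a nonzero section rather than vanishing everywhere. The one point deserving a word of care is to phrase the contradiction at the level of sections (or, equivalently, to restrict to the open locus where $\mathcal{X}_2\neq 0$), so as to avoid the single point $\bar 0$ where $\mathcal{X}_2$ happens to vanish; both formulations follow at once from the explicit expression for $\mathcal{X}_2$.
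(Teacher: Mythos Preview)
Your proof is correct and is exactly the argument the paper intends: the Corollary is the direct specialization to $E_{0}$ of part~2 of the Corollary following the first Bianchi identities, using the explicitly computed non-vanishing Jacobiator $J\big(X_{1}^{1},X_{2}^{1},X_{1}^{2}\big)=-2\mathcal{X}_{2}$. The paper gives no separate proof precisely because this is immediate from the earlier general statement; your care about the single point $\bar 0$ where $\mathcal{X}_{2}$ vanishes is a nice touch but not strictly necessary, since the Jacobiator being a nonzero section already suffices.
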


Consider now the following algebroid. The vector bundle $E_{0}^{\prime }$ over ${\mathbb R}^{2}$ is generated by four generators, denoted by $\big\{Y_{1}^{1},Y_{2}^{2},\mathcal{Y}_{1},\mathcal{Y}_{2}\big\}$, the anchor is
given by
\begin{gather*} \rho ^{\prime }\big( Y_{1}^{1}\big) =\big( x^{1}\big) ^{2}\frac{\partial }{\partial x^{1}}, \qquad \rho ^{\prime }\big( Y_{2}^{2}\big) =\big( x^{2}\big) ^{2}\frac{\partial }{\partial x^{2}},
\qquad \rho ^{\prime}\big( \mathcal{Y}_{1}\big) =\rho ^{\prime }\big( \mathcal{Y}_{1}\big) =0
\end{gather*} and the bracket of generators is
\begin{gather*}
\big[Y_{1}^{1},Y_{2}^{2}\big]_{E_{0}^{\prime}}=0, \qquad \big[Y_{1}^{1},\mathcal{Y}_{1}\big]_{E_{0}^{\prime }}= \big[Y_{2}^{2},\mathcal{Y}_{2}\big]_{E_{0}^{\prime }}=0, \qquad \big[Y_{1}^{1},\mathcal{Y}_{2}\big]_{E_{0}^{\prime}}= 2x^{1}\mathcal{Y}_{2},\\
\big[Y_{2}^{2},\mathcal{Y}_{1}\big]_{E_{0}^{\prime}}=2x^{2}\mathcal{Y}_{1}, \qquad \big[\mathcal{Y}_{1},\mathcal{Y}_{2}\big]_{E_{0}^{\prime}}=2\big( x^{1}\big) ^{2}x^{2}\mathcal{Y}_{1}+2x^{1}\big( x^{2}\big)^{2}\mathcal{Y}_{2}.
\end{gather*}

Let us consider the map
\begin{gather*}
f_{0}\colon \ E_{0}^{\prime }\rightarrow E_{0}, \qquad f_{0}\big( Y_{1}^{1}\big) =X_{1}^{1}, \qquad f_{0}\big( Y_{2}^{2}\big) =X_{2}^{2}, \qquad f_{0}\big( \mathcal{Y}_{1}\big) =\mathcal{X}_{1}, \qquad f_{0}(\mathcal{Y}_{2}) =\mathcal{X}_{2},
\end{gather*} then extended by generators. Then $f_{0}$ is a morphism of algebroids.

It is interesting that the new bracket given by
\begin{gather*}
\big[Y_{1}^{1},Y_{2}^{2}\big]_{E_{0}^{\prime }}^{\prime }=0, \qquad \big[Y_{1}^{1},\mathcal{Y}_{1}\big]_{E_{0}^{\prime }}^{\prime }=\big[Y_{2}^{2},\mathcal{Y}_{2}\big]_{E_{0}^{\prime}}^{\prime }=0, \qquad \big[Y_{1}^{1},\mathcal{Y}_{2}\big]_{E_{0}^{\prime }}^{\prime}=2x^{1}\mathcal{Y}_{2},\\
\big[Y_{2}^{2},\mathcal{Y}_{1}\big]_{E_{0}^{\prime}}^{\prime }=2x^{2}\mathcal{Y}_{1}, \qquad \big[\mathcal{Y}_{1},\mathcal{Y}_{2}\big]_{E_{0}^{\prime }}=0
\end{gather*} gives a Lie algebroid bracket on $E_{0}^{\prime }$. Obviously, there is not any Lie algebroid structure on~$E_{0}$ such that~$f_{0}$ be a Lie algebroid morphism.

In fact the vector subbundle $E_{0}^{\prime \prime }\subset E_{0}$ generated by $\mathcal{A}_{1}=X_{1}^{1}+X_{1}^{2}$ and $\mathcal{B}_{1}=X_{2}^{1}+X_{2}^{2}$ is a~subalgebroid and the induced bracket gives a~Lie algebroid structure on $E_{0}^{\prime \prime }$.

Denote now by $\mathcal{S}_{0}=\{\mathcal{A}_{1}^{\prime }, \mathcal{A}_{2}^{\prime }, \mathcal{X}_{1}^{\prime }, \mathcal{X}_{2}^{\prime }\}$ the restrictions of the corresponding sections $\mathcal{A}_{1}$, $\mathcal{A}_{2}$, $\mathcal{X}_{1}$, $\mathcal{X}_{2}$ to $E_{00}=E_{0}\backslash \{(0,0)\}\longrightarrow {\mathbb R}^{2}\backslash \{(0,0)\}$. Then $\mathcal{S}_{0}$ is a global base of sections $\Gamma ( E_{00})$. More,
considering the restriction of the anchor from $E_{0}$ to $E_{00}$ and the bracket $[\mathcal{X}_{1}^{\prime },\mathcal{X}_{2}^{\prime }]_{E_{00}}=[\mathcal{X}_{\alpha }^{\prime },\mathcal{A}_{\beta }^{\prime }]_{E_{00}}=0$,
$\alpha ,\beta =1,2$, and $[\mathcal{A}_{1}^{\prime },\mathcal{A}_{2}^{\prime }]_{E_{00}}=-2x^{2}\mathcal{A}_{1}^{\prime }+2x^{1}\mathcal{A}_{2}^{\prime }$, then one obtain a Lie algebroid structure on~$E_{00}$. Thus
removing the origin, the almost Lie algebroid on $E_{0}$ (that has not a Lie algebroid structure) changes essentially to the almost Lie algebroid on $E_{00}$, that allows a Lie algebroid structure. Notice that $\mathcal{S}_{0}$ can not extend to a base of~$\Gamma (E_{0})$.

As in the general case in \cite{PP-A}, we can consider the derived anchored bundle $E^{(1)}=E\oplus ( E\wedge E ) $, where the anchor is denoted as $\rho ^{(1)}$ and the bracket by $[\cdot ,\cdot ]_{E^{(1)}}$. Concretely, $\rho ^{(1)}(X) =\rho (X) $, $\rho ^{(1)}( X\wedge Y) =[\rho (X) ,\rho (Y)]-\rho ( \lbrack X,Y]_{E}) $, and consider an $E$-connection~$\nabla $ on $E$ that has a~null torsion. We can lift $\nabla $ to a linear $E^{(1)}$-connection $\nabla ^{(1)}$ on $E^{(1)}$ using formulas
\begin{gather*}\begin{split}
&\nabla _{X}^{(1)}Y=\nabla _{X}Y+\tfrac{1}{2}X\wedge Y, \qquad \nabla _{X\wedge Y}^{(1)}Z=\nabla _{X\wedge Y}Z, \\
&\nabla _{X}^{(1)}( Y\wedge Z)=\nabla _{X}( Y\wedge Z) =\nabla _{X}Y\wedge Z+X\wedge \nabla_{X}Z
\end{split}
\end{gather*} and
\begin{gather*} \nabla _{X\wedge Y}^{(1)}( Z\wedge T) =\nabla_{X\wedge Y}( Z\wedge T) =\nabla _{X\wedge Y}Z\wedge T+Z\wedge \nabla _{X\wedge Y}T.
\end{gather*}

We define the bracket $[\cdot ,\cdot ]_{E^{(1)}}$ on $E^{(1)}$ by $[U,V]_{E^{(1)}}=\nabla _{U}^{(1)}V-\nabla _{V}^{(1)}U$, i.e., such as $\nabla^{(1)}$ has a null torsion.

\begin{Proposition}\label{PrAlg}If $E$ is an algebroid, then $E^{(1)}$ is an algebroid iff
\begin{gather*}
\rho ( R(X,Y) Z) =0,
\end{gather*}
$\forall\, X,Y,Z\in \Gamma (E) $.
\end{Proposition}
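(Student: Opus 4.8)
The plan is to verify the three defining axioms of a skew symmetric algebroid for the triple $\big(E^{(1)}, \rho^{(1)}, [\cdot,\cdot]_{E^{(1)}}\big)$ and to isolate which axiom produces the stated curvature condition. Since the bracket is defined by $[U,V]_{E^{(1)}} = \nabla^{(1)}_U V - \nabla^{(1)}_V U$, i.e., as the torsion-free bracket of $\nabla^{(1)}$, skew symmetry holds by construction. The Leibniz rule $[U, fV]_{E^{(1)}} = \rho^{(1)}(U)(f)V + f[U,V]_{E^{(1)}}$ follows mechanically from the Koszul properties of $\nabla^{(1)}$, since $[U,fV]_{E^{(1)}} = \nabla^{(1)}_U(fV) - \nabla^{(1)}_{fV}U = \rho^{(1)}(U)(f)V + f\nabla^{(1)}_U V - f\nabla^{(1)}_V U$. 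Thus the only genuine content lies in the anchor compatibility $\rho^{(1)}\big([U,V]_{E^{(1)}}\big) = \big[\rho^{(1)}(U), \rho^{(1)}(V)\big]$; and since the defect of this identity is $\mathcal{F}(M)$-bilinear whenever the Leibniz rules hold, it suffices to test it on homogeneous generators.

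First I would record the decisive preliminary observation: because $E$ is an algebroid, $\rho([X,Y]_E) = [\rho(X),\rho(Y)]$, whence $\rho^{(1)}(X\wedge Y) = [\rho(X),\rho(Y)] - \rho([X,Y]_E) = 0$. So $\rho^{(1)}$ annihilates the entire summand $\Gamma(E\wedge E)$ and factors through the projection $E^{(1)}\to E$. This single fact organizes the whole computation. For $X,Y\in\Gamma(E)$, torsion-freeness of $\nabla$ gives $[X,Y]_{E^{(1)}} = [X,Y]_E + X\wedge Y$, and applying $\rho^{(1)}$ annihilates the $X\wedge Y$ term, leaving exactly $\rho([X,Y]_E)=[\rho(X),\rho(Y)]$, which holds by hypothesis. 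For two bivectors $X\wedge Y$ and $Z\wedge T$, the lift formulas show that $[X\wedge Y, Z\wedge T]_{E^{(1)}}$ lands entirely in $\Gamma(E\wedge E)$, on which $\rho^{(1)}$ vanishes, while the right-hand side is $[0,0]=0$; so this case is automatic.

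The crucial mixed case is $U = X\in\Gamma(E)$ and $V = Y\wedge Z\in\Gamma(E\wedge E)$. Using $\nabla^{(1)}_{Y\wedge Z}X = R(Y,Z)X$ together with the lift of $\nabla_X$ on bivectors, I would compute $[X, Y\wedge Z]_{E^{(1)}} = \nabla_X Y\wedge Z + Y\wedge\nabla_X Z - R(Y,Z)X$, whose only $\Gamma(E)$-component is $-R(Y,Z)X$. Applying $\rho^{(1)}$ therefore yields $-\rho(R(Y,Z)X)$, whereas the right-hand side is $[\rho(X),0]=0$. Hence anchor compatibility in this case holds if and only if $\rho(R(Y,Z)X)=0$ for all $X,Y,Z\in\Gamma(E)$, which is exactly the asserted condition.

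The main obstacle is the mixed case — more bookkeeping than conceptual — where one must apply the lift formulas carefully and keep track of which terms are bivectors (hence annihilated by $\rho^{(1)}$) and which are genuine sections of $E$. A subtle point to confirm in passing is that $\nabla^{(1)}$ really is a linear $E^{(1)}$-connection: the piece $\nabla^{(1)}_{X\wedge Y}Z = R(X,Y)Z$ is $\mathcal{F}(M)$-linear in $Z$ and tensorial in $X\wedge Y$ precisely because, $E$ being an algebroid, the curvature $R$ is a tensor (as remarked after the Corollary); this is what makes the Leibniz rule above hold with no further hypothesis. Collecting the three cases, $\big(E^{(1)}, \rho^{(1)}, [\cdot,\cdot]_{E^{(1)}}\big)$ satisfies all algebroid axioms if and only if $\rho(R(X,Y)Z) = 0$, which is the claim.
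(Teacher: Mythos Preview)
Your argument is correct and follows essentially the same route as the paper's own proof: both hinge on the observation that $\rho^{(1)}$ annihilates $\Gamma(E\wedge E)$ when $E$ is an algebroid, and both isolate the mixed case $[X\wedge Y, Z]_{E^{(1)}}$ as the one producing the condition $\rho(R(X,Y)Z)=0$. The paper's proof is terser, recording only that crucial mixed computation, whereas you additionally spell out the $E\times E$ and bivector--bivector cases and justify why testing on generators suffices; but the core idea and the decisive computation are the same.
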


\begin{proof} We have that $E$ is an algebroid iff $\rho ^{(1)}(X\wedge Y) =0$. Thus if $E^{(1)}$ is an algebroid, then
\begin{gather*}
\big[\rho^{(1)}( X\wedge Y) ,\rho ^{(1)}( Z) \big]=0=\rho^{(1)}\big( [X\wedge Y,Z]_{E^{(1)}}\big) \\
\qquad {} =\rho ^{(1)}( \nabla_{X\wedge Y}Z-\nabla z( X\wedge Y)) =\rho ^{(1)}(\nabla _{X\wedge Y}Z),
\end{gather*} and conversely, this equality ensures that~$E^{(1)}$ is an algebroid.
\end{proof}

We extend the definition of $\nabla _{X\wedge Y}Z=R(X,Y) Z$ to $\nabla _{X\wedge Y}( Z\wedge T) =\nabla _{X\wedge Y}Z\wedge T+Z\wedge \nabla _{X\wedge Y}T$.

The following properties of curvature $R^{(1)}$ of $\nabla ^{(1)}$ can be obtained by straightforward computations.

\begin{Proposition} \label{Prhelp01}If $X,X_{1},X_{2},Y,Y_{1},Y_{2},Z,T\in \Gamma (E) $, then
\begin{alignat*}{3}
& 1) \quad && R^{(1) }(X,Y) Z=R^{(1) }(X,Y)( Z\wedge T) =0,& \\
& 2) \quad && R^{(1)}( X_{1}\wedge X_{2},Y) Z=\nabla _{X_{1}\wedge X_{2}}\nabla _{Y}Z-\nabla _{Y}\nabla _{X_{1}\wedge X_{2}}Z-\nabla _{\nabla _{X_{1}\wedge X_{2}}Y}Z+\nabla _{\nabla _{Y}(X_{1}\wedge X_{2}) }Z,& \\
& 3) \quad && R^{(1)}( X_{1}\wedge X_{2},Y) ( Z\wedge T) =R^{(1)}( X_{1}\wedge X_{2},Y) Z\wedge T+Z\wedge R^{(1)}(X_{1}\wedge X_{2},Y) T, & \\
& 4) \quad && R^{(1)}( X_{1}\wedge X_{2},Y_{1}\wedge Y_{2}) Z=\nabla _{X_{1}\wedge X_{2}}\nabla _{Y_{1}\wedge Y_{2}}Z-\nabla_{Y_{1}\wedge Y_{2}}\nabla _{X_{1}\wedge X_{2}}Z-\nabla _{\nabla_{X_{1}\wedge X_{2}}( Y_{1}\wedge Y_{2}) }Z& \\
&&& \hphantom{R^{(1)}( X_{1}\wedge X_{2},Y_{1}\wedge Y_{2}) Z=}{} +\nabla _{\nabla_{Y_{1}\wedge Y_{2}}( X_{1}\wedge X_{2}) }Z,& \\
& 5) \quad && R^{(1)}( X_{1}\wedge X_{2},Y_{1}\wedge Y_{2}) (Z\wedge T) =R^{(1)}( X_{1}\wedge X_{2},Y) Z\wedge T+Z\wedge R^{(1)}(X_{1}\wedge X_{2},Y) T.&
\end{alignat*}
\end{Proposition}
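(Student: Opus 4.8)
The plan is to obtain all five identities by expanding the defining curvature formula $R^{(1)}(U,V)W=\nabla^{(1)}_{U}\nabla^{(1)}_{V}W-\nabla^{(1)}_{V}\nabla^{(1)}_{U}W-\nabla^{(1)}_{[U,V]_{E^{(1)}}}W$ and substituting, case by case, the piecewise definition of $\nabla^{(1)}$ according to the degree ($E$ or $E\wedge E$) of each argument. The one preliminary I would record first is the $E^{(1)}$-bracket in each degree. Since $[\cdot,\cdot]_{E^{(1)}}$ is defined as the vanishing-torsion bracket $[U,V]_{E^{(1)}}=\nabla^{(1)}_{U}V-\nabla^{(1)}_{V}U$, the null torsion of $\nabla$ gives $[X,Y]_{E^{(1)}}=[X,Y]_{E}+X\wedge Y$ for $X,Y\in\Gamma(E)$, while $[X_{1}\wedge X_{2},Y]_{E^{(1)}}=\nabla_{X_{1}\wedge X_{2}}Y-\nabla_{Y}(X_{1}\wedge X_{2})$ splits into an $E$-part and an $E\wedge E$-part, and $[X_{1}\wedge X_{2},Y_{1}\wedge Y_{2}]_{E^{(1)}}$ stays in $\Gamma(E\wedge E)$.

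For claim~1 I would expand $R^{(1)}(X,Y)Z$: the Leibniz half-wedge terms $\frac12X\wedge\nabla_{Y}Z$ and $\frac12Y\wedge\nabla_{X}Z$ cancel between the two orders, leaving $(\nabla_{X}\nabla_{Y}Z-\nabla_{Y}\nabla_{X}Z)+\frac12[X,Y]_{E}\wedge Z$; from the bracket term $\nabla^{(1)}_{[X,Y]_{E^{(1)}}}Z=\nabla_{[X,Y]_{E}}Z+\frac12[X,Y]_{E}\wedge Z+R(X,Y)Z$ the half-wedge piece cancels, the expression $\nabla_{X}\nabla_{Y}Z-\nabla_{Y}\nabla_{X}Z-\nabla_{[X,Y]_{E}}Z=R(X,Y)Z$ is recovered from the commutator, and this cancels the $R(X,Y)Z=\nabla_{X\wedge Y}Z$ coming from the wedge component, so $R^{(1)}(X,Y)Z=0$. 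The second half $R^{(1)}(X,Y)(Z\wedge T)=0$ then follows because every $\nabla^{(1)}$ here acts as a $\wedge$-derivation, so the same expansion produces $R^{(1)}(X,Y)Z\wedge T+Z\wedge R^{(1)}(X,Y)T$, which vanishes by the first half.

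Claims~2 and~4 are direct specializations of the same expansion. In claim~2 the operator $\nabla^{(1)}$ acting in the lower ($E$) slot again generates half-wedge terms $\frac12\nabla_{X_{1}\wedge X_{2}}Y\wedge Z$ and $\frac12Y\wedge\nabla_{X_{1}\wedge X_{2}}Z$; these all cancel — one pair between the two orders of the commutator, the other against the corresponding piece of $\nabla^{(1)}_{[X_{1}\wedge X_{2},Y]_{E^{(1)}}}Z$ — and what survives is exactly the stated four-term formula. Claim~4 is the cleanest: the bracket $[X_{1}\wedge X_{2},Y_{1}\wedge Y_{2}]_{E^{(1)}}$ lies wholly in $\Gamma(E\wedge E)$, and on both $E$ and $E\wedge E$ the operator $\nabla^{(1)}$ along a wedge coincides with $\nabla$, so no half-wedge terms are produced and the formula drops out at once.

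Finally, claims~3 and~5 are the Leibniz (derivation) property over $\wedge$, which I would deduce structurally rather than by brute force: each operator appearing in $R^{(1)}(X_{1}\wedge X_{2},\,\cdot\,)$ — namely $\nabla^{(1)}_{X_{1}\wedge X_{2}}$, the operator along the second argument, and the connections along the two components of the $E^{(1)}$-bracket — acts as a derivation on $\Gamma(E\wedge E)$, and the curvature is a commutator of such operators corrected by a single derivation term, hence is itself a derivation. The main obstacle is not any single identity but the bookkeeping of degrees: because $\nabla^{(1)}$ is defined by two different formulas and $[U,V]_{E^{(1)}}$ generally carries components in both $E$ and $E\wedge E$, one must track carefully into which slot each intermediate term falls. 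The delicate cancellations in claim~1 in particular depend on correctly pairing the $\frac12(\,\cdot\,)\wedge Z$ terms and on the identification $\nabla_{X\wedge Y}Z=R(X,Y)Z$; a single misassigned degree would break them.
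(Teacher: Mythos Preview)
Your proposal is correct and matches the paper's own treatment: the paper gives no proof beyond the remark that the identities ``can be obtained by straightforward computations,'' and what you outline is precisely that computation, carried out degree by degree from the definitions of $\nabla^{(1)}$ and of the torsion-free bracket $[U,V]_{E^{(1)}}=\nabla^{(1)}_{U}V-\nabla^{(1)}_{V}U$. One small imprecision worth tightening: in the second half of claim~1, the expansion of $R^{(1)}(X,Y)(Z\wedge T)$ does not literally produce $R^{(1)}(X,Y)Z\wedge T+Z\wedge R^{(1)}(X,Y)T$ term by term (the half-wedge pieces that appear when $\nabla^{(1)}$ acts on an element of $\Gamma(E)$ are absent when it acts on $\Gamma(E\wedge E)$); rather, it yields $(R(X,Y)Z-\nabla_{X\wedge Y}Z)\wedge T+Z\wedge(R(X,Y)T-\nabla_{X\wedge Y}T)$, which vanishes by the identification $\nabla_{X\wedge Y}=R(X,Y)$. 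The conclusion is unaffected.
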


We use now this constructions in the case of $E_{0}$.

An $E_{0}$-linear connection $\nabla $ on $E_{0}$ with vanishing torsion can be constructed by generators using the formulas
\begin{alignat*}{3}
& \nabla _{X_{1}^{1}}X_{2}^{1}=-\nabla _{X_{2}^{1}}X_{1}^{1}=x^{2}X_{2}^{1}, \qquad &&\nabla _{X_{1}^{1}}X_{1}^{2}=-\nabla _{X_{1}^{2}}X_{1}^{1}=-x^{1}X_{1}^{2},&\\
& \nabla _{X_{1}^{1}}X_{2}^{2}=-\nabla _{X_{2}^{2}}X_{1}^{1}=0, \qquad &&\nabla_{X_{2}^{1}}X_{1}^{2}=-\nabla_{X_{1}^{2}}X_{2}^{1}=x^{2}X_{1}^{1}-x^{1}X_{2}^{2}, &\\
& \nabla_{X_{2}^{1}}X_{2}^{2}=-\nabla _{X_{2}^{2}}X_{2}^{1}=x^{2}X_{2}^{1}, \qquad &&\nabla_{X_{1}^{2}}X_{2}^{2}=-\nabla _{X_{2}^{2}}X_{1}^{2}=-x^{2}X_{1}^{2}. &
\end{alignat*}
In a compact form $\nabla $ has the form
\begin{gather*}
\nabla _{X_{j}^{i}}X_{l}^{k}=2x^{k}\delta _{j}^{k}X_{l}^{i}
\end{gather*}

By a direct computation, we can get the following result.

\begin{Proposition}The curvature of the above $E_{0}$-connection $\nabla $ is
\begin{alignat*}{4}
& R\big(X_{1}^{1},X_{1}^{2}\big) X_{2}^{1}=-2\mathcal{X}_{2}, \qquad && R\big(X_{1}^{1},X_{1}^{2}\big) X_{1}^{1}=-2\mathcal{X}_{1},\qquad && R\big(X_{1}^{1},X_{1}^{2}\big) X_{2}^{1}=-2\mathcal{X}_{2},&\\
& R\big(X_{2}^{2},X_{2}^{1}\big) X_{1}^{2}=2\mathcal{X}_{1},\qquad && R\big(X_{2}^{2},X_{2}^{1}\big) X_{2}^{2}=2\mathcal{X}_{1}, \qquad && R\big(X_{2}^{2},X_{2}^{1}\big) X_{2}^{2}=2\mathcal{X}_{2}&
\end{alignat*} and the corresponding skew symmetric relations, but the other components are null.
\end{Proposition}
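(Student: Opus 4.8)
The goal is to compute the curvature of the connection $\nabla$ on $E_0$ defined in compact form by $\nabla_{X_j^i}X_l^k = 2x^k\delta_j^k X_l^i$, where the curvature is $R(X,Y)Z = \nabla_X\nabla_Y Z - \nabla_Y\nabla_X Z - \nabla_{[X,Y]_{E_0}}Z$. Since $R$ is tensorial in all three arguments (the connection has vanishing torsion and $E_0$ is a skew algebroid with the prescribed bracket), it suffices to evaluate $R$ on the generators $X_j^i$, and then the listed values plus skew symmetry in the first two slots determine everything. So the plan is to pick the relevant triples of generators and carry out the three-term computation, reading off the bracket from formula~(\ref{Brack01}) and the connection from the compact formula above.

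First I would fix a triple, say $\big(X_1^1, X_1^2, X_2^1\big)$, and expand each of the three terms. For $\nabla_{X_1^1}\nabla_{X_1^2}X_2^1$ I apply the compact rule twice: compute $\nabla_{X_1^2}X_2^1$ first, which by the formula $\nabla_{X_j^i}X_l^k=2x^k\delta_j^k X_l^i$ (with $(i,j)=(2,1)$, $(k,l)=(1,2)$) gives $2x^1\delta_1^1 X_2^2 = 2x^1 X_2^2$ — here I must be careful with the index bookkeeping, matching the lower/upper slots correctly. Then I apply $\nabla_{X_1^1}$ to the result, remembering that the connection acts $\mathcal{F}(\mathbb{R}^2)$-linearly in the lower slot but via the anchor's Leibniz rule on the coefficient functions $2x^1$, since $\nabla_{X_1^1}(fs) = \rho(X_1^1)(f)s + f\nabla_{X_1^1}s$ and $\rho(X_1^1)=(x^1)^2\partial/\partial x^1$. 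I do the same for the swapped term $\nabla_{X_1^2}\nabla_{X_1^1}X_2^1$ and for the bracket term $\nabla_{[X_1^1,X_1^2]_{E_0}}X_2^1$, where $[X_1^1,X_1^2]_{E_0}=-2x^1 X_1^2$ from the bracket list. Collecting the three contributions should yield a multiple of $\mathcal{X}_1=(x^2)^2X_1^1-(x^1)^2X_1^2$ or $\mathcal{X}_2=(x^2)^2X_2^1-(x^1)^2X_2^2$, matching the stated $-2\mathcal{X}_2$. I would repeat for the triple $\big(X_2^2, X_2^1, X_1^2\big)$ to obtain the $2\mathcal{X}_1$ family, and verify that all remaining generator-triples give zero curvature (these are exactly the triples lying inside the integrable subalgebroids $E_{01}$, $E_{02}$, where the Jacobiator vanishes).

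The main obstacle I expect is purely the care needed in the index/coefficient bookkeeping: the connection and bracket are given in a compressed Kronecker-delta form, so the most error-prone step is correctly applying the anchor's Leibniz rule to the coordinate-function coefficients (the $x^1, x^2$ factors) when a second covariant derivative is taken, and keeping the skew-symmetry conventions straight. A useful consistency check is the Bianchi identity from the first Proposition: since $\nabla$ has vanishing torsion, $\sum_{\mathrm{circ}}R(X,Y)Z = \mathcal{J}_{E_0}(X,Y,Z)$, so the cyclic sum of curvature terms must reproduce the already-computed Jacobiator values $J\big(X_1^1,X_2^1,X_1^2\big)=-2\mathcal{X}_2$ and $J\big(X_2^1,X_1^2,X_2^2\big)=-2\mathcal{X}_1$; matching these pins down the normalizations and the signs. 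Finally I would record that the curvature lands in $\ker\rho$ (spanned by $\mathcal{X}_1,\mathcal{X}_2$), which is consistent with Proposition~\ref{PrAlg}, and state that all components not listed, together with their images under the skew symmetry $R(X,Y)=-R(Y,X)$ in the first two slots, vanish.
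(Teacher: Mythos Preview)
Your approach is exactly what the paper does: it states the result as obtained ``by a direct computation'' and gives no further details, so your explicit three-term expansion on generators is precisely the intended proof. Your added Bianchi-identity cross-check (cyclic sum of curvatures equals the Jacobiator, since $T=0$) is a nice supplement the paper does not spell out; one small remark is that tensoriality of $R$ follows from the almost Lie (anchor-preserving) condition alone, not from the vanishing torsion you parenthetically invoke.
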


Since $\rho \left( R(X,Y) Z\right) =0$, using Proposition~\ref{PrAlg} we get

\begin{Corollary}Using the above $E_{0}$-connection $\nabla $, the derived anchored $E_{0}^{(1)}$ has an almost Lie algebroid structure.
\end{Corollary}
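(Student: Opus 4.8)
The plan is to apply Proposition~\ref{PrAlg} directly. The final Corollary asserts that the derived anchored bundle $E_{0}^{(1)}$, built from the specific $E_{0}$-connection $\nabla$ displayed above, carries an almost Lie algebroid structure. By Proposition~\ref{PrAlg}, since $E_{0}$ together with the bracket $[\cdot,\cdot]_{E_{0}}$ of \eqref{Brack01} is a skew symmetric algebroid, the derived $E_{0}^{(1)}$ is itself an algebroid precisely when $\rho(R(X,Y)Z)=0$ for all $X,Y,Z\in\Gamma(E_{0})$. So the entire task reduces to verifying this single condition on the curvature of $\nabla$.

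First I would read off the curvature from the immediately preceding Proposition. Its statement lists every nonzero component of $R$, and crucially each one has the form $R(\cdot,\cdot)\cdot=\pm 2\mathcal{X}_{1}$ or $\pm 2\mathcal{X}_{2}$; the remaining components vanish. Thus the image of $R$ lies entirely in the subbundle spanned by $\mathcal{X}_{1}$ and $\mathcal{X}_{2}$. I would then invoke the defining property of these two sections established right after \eqref{Anch01}: $\mathcal{X}_{1}=\big(x^{2}\big)^{2}X_{1}^{1}-\big(x^{1}\big)^{2}X_{1}^{2}$ and $\mathcal{X}_{2}=\big(x^{2}\big)^{2}X_{2}^{1}-\big(x^{1}\big)^{2}X_{2}^{2}$ are exactly the generators of $\ker\rho$. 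Hence $\rho(\mathcal{X}_{1})=\rho(\mathcal{X}_{2})=0$, and by $\mathcal{F}\big({\mathbb R}^{2}\big)$-linearity of the anchor we conclude $\rho(R(X,Y)Z)=0$ on all generators, and therefore on all sections. This is the hypothesis required by Proposition~\ref{PrAlg}.

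Having the algebroid condition, I would conclude that $E_{0}^{(1)}$ is an algebroid. To upgrade ``algebroid'' to ``almost Lie algebroid'' I would note that the bracket $[\cdot,\cdot]_{E^{(1)}}$ was \emph{defined} via $[U,V]_{E^{(1)}}=\nabla_{U}^{(1)}V-\nabla_{V}^{(1)}U$, which is manifestly skew symmetric, and that the lifted connection $\nabla^{(1)}$ satisfies the Koszul/Leibniz rules inherited from $\nabla$; together with the just-verified anchor compatibility $\rho^{(1)}([U,V]_{E^{(1)}})=[\rho^{(1)}(U),\rho^{(1)}(V)]$ this gives precisely the three defining axioms of a skew symmetric bracket, hence an almost Lie algebroid structure. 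No vanishing of the Jacobiator is claimed, consistent with Theorem~\ref{Th01-alg} which forbids a genuine Lie algebroid bracket here.

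The one step that deserves care, rather than being wholly routine, is checking that the anchor hypothesis must hold on \emph{all} triples of sections and not merely on the generators listed in the Proposition. I expect this to be the only real obstacle, but it dissolves quickly: the curvature $R$ is $\mathcal{F}\big({\mathbb R}^{2}\big)$-trilinear (it is a tensor, since $E_{0}$ is an algebroid, as remarked after the first Corollary), so its image on arbitrary sections is an $\mathcal{F}\big({\mathbb R}^{2}\big)$-combination of its values on generators, each of which lies in $\ker\rho=\langle\mathcal{X}_{1},\mathcal{X}_{2}\rangle$; applying the linear map $\rho$ then yields zero everywhere. This completes the verification and the corollary follows.
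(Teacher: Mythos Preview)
Your proposal is correct and follows essentially the same route as the paper: the paper's entire argument is the one-line observation ``Since $\rho(R(X,Y)Z)=0$, using Proposition~\ref{PrAlg} we get'' the Corollary, and you have simply spelled out why that observation holds (all nonzero curvature values are multiples of $\mathcal{X}_{1}$ or $\mathcal{X}_{2}$, which generate $\ker\rho$) and why tensoriality extends it to arbitrary sections. Your additional remarks distinguishing ``algebroid'' from ``almost Lie algebroid'' are more careful than the paper itself, which uses the terms loosely here.
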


By a straightforward and a long computations, but using Proposition~\ref{Prhelp01} in order to shorten the calculations, we obtain

\begin{Theorem}\label{ThAlg02}The almost Lie algebroid $E_{0}^{(1)}$ is a Lie algebroid.
\end{Theorem}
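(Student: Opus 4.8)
The plan is to prove that $E_0^{(1)}$ is a Lie algebroid by verifying that its Jacobiator vanishes identically. Since $E_0^{(1)} = E_0 \oplus (E_0 \wedge E_0)$ is an almost Lie algebroid by the preceding Corollary, it suffices to show $\mathcal{J}_{E_0^{(1)}}(U,V,W) = 0$ for all $U,V,W \in \Gamma(E_0^{(1)})$. Because the Jacobiator is $\mathcal{F}(\mathbb{R}^2)$-multilinear and skew-symmetric (the bracket giving an almost Lie structure forces this, as the anchor compatibility $\rho^{(1)}([U,V]_{E^{(1)}}) = [\rho^{(1)}(U),\rho^{(1)}(V)]$ holds), I only need to check it on the generators of $E_0^{(1)}$. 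These generators split into two types: the four base generators $X_j^i$ lying in the $E_0$-summand, and the wedge generators $X_j^i \wedge X_l^k$ lying in $E_0 \wedge E_0$. I would organize the computation by how many wedge-type arguments appear among $U,V,W$.

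The key structural simplification comes from Proposition~\ref{Prhelp01}. First I would observe that $\rho^{(1)}(X \wedge Y) = [\rho(X),\rho(Y)] - \rho([X,Y]_{E_0}) = 0$ since $E_0$ is already an algebroid; hence all wedge generators lie in $\ker \rho^{(1)}$, and by the Leibniz rule the bracket $[\,\cdot,\cdot\,]_{E^{(1)}}$ restricted to arguments in the wedge-summand behaves tensorially. The main work is the case where all three arguments are base generators $X_j^i$. Here $[X,Y]_{E^{(1)}} = \nabla_X^{(1)} Y - \nabla_Y^{(1)} X = [X,Y]_{E_0} + X\wedge Y$ (using the torsion-free lift and $\nabla_X Y - \nabla_Y X = [X,Y]_{E_0}$), so the $E^{(1)}$-bracket of two base sections acquires an $E_0\wedge E_0$ correction term. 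Expanding $\mathcal{J}_{E^{(1)}}(X,Y,Z)$ and sorting into its $E_0$-component and its $E_0\wedge E_0$-component, the $E_0$-part reproduces $\mathcal{J}_{E_0}(X,Y,Z)$ plus curvature terms, while the wedge-component is governed by $R^{(1)}$ and by the identity from the first Bianchi Proposition. The point of Proposition~\ref{Prhelp01}, parts 1) and 2), is precisely that $R^{(1)}(X,Y)Z = 0$ on pure base arguments, which kills the problematic wedge contributions and collapses the $E_0$-component to an expression that vanishes because the defining data of $\nabla$ and $R$ were built to satisfy $\rho(R(X,Y)Z)=0$.

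For the remaining cases — one, two, or three wedge-type arguments — I would invoke parts 3), 4), and 5) of Proposition~\ref{Prhelp01} to reduce each bracket and each nested curvature to the already-computed base quantities, exploiting the derivation property $\nabla_{X\wedge Y}(Z\wedge T) = \nabla_{X\wedge Y}Z \wedge T + Z \wedge \nabla_{X\wedge Y}T$ to peel off wedge factors. The skew-symmetry and multilinearity of $\mathcal{J}$ mean many of these mixed cases coincide up to sign or reduce to the three-base-generator case after one wedge factor is stripped off.

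\medskip

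\emph{Main obstacle.} The hard part will be bookkeeping in the all-base-generators case: one must carefully track both components of $[X,Y]_{E^{(1)}} = [X,Y]_{E_0} + X\wedge Y$ through the double bracket $[X,[Y,Z]_{E^{(1)}}]_{E^{(1)}}$, since each inner bracket produces a wedge term that then re-enters an outer bracket, mixing the summands. Concretely, $[X, Y\wedge Z]_{E^{(1)}} = \nabla^{(1)}_X(Y\wedge Z) - \nabla^{(1)}_{Y\wedge Z} X$ couples the connection lift with the curvature lift, and it is here that the cancellation forced by $\rho(R(X,Y)Z)=0$ together with $R^{(1)}\equiv 0$ on base arguments must be applied precisely. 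I expect the calculation to be exactly the ``long computation'' the authors allude to; Proposition~\ref{Prhelp01} is the tool that makes it finite and checkable rather than unwieldy, by guaranteeing that the genuinely new curvature object $R^{(1)}$ contributes nothing whenever a pure base section sits in the first slot.
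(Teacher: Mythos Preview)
Your proposal matches the paper's own argument, which consists solely of the remark that the result follows ``by a straightforward and a long computation, using Proposition~\ref{Prhelp01} in order to shorten the calculations''; your case-split by the number of wedge-type generators and systematic reduction via the $R^{(1)}$-formulas is exactly how such a computation is organised. One small correction to your difficulty estimate: since $\nabla^{(1)}$ is torsion-free by construction, the first Bianchi identity gives $\mathcal{J}_{E^{(1)}}(U,V,W)=\sum_{\mathrm{circ}}R^{(1)}(U,V)W$ outright, so by part~1) of Proposition~\ref{Prhelp01} the three-base-generator case vanishes immediately and the genuine ``long computation'' lives in the mixed cases, where the explicit $E_{0}$-curvature data must be inserted into the formulas of parts~2)--5).
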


According to \cite{Vas01} (see also \cite{BrGr, LiShXu02}), an anchored vector bundle $E$ with anchor $\rho $ is a \emph{Courant vector bundle} if there is a pseudo-euclidian metric~$g$ in the fibers of~$E$ such that $\rho
\circ \#\circ \rho ^{\ast }\colon T^{\ast }M\rightarrow TM$ vanishes, where $\#\colon E^{\ast }\rightarrow E$ is the musical isomorphism induced by~$g$.

\begin{Proposition}\label{prCourant}There is no pseudo-euclidean metric $g$ on $E_{0}$ such that $E_{0}$ is a Courant algebroid according to the given anchor.
\end{Proposition}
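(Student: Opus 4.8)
The plan is to fix the global frame $\big\{X_1^1,X_2^1,X_1^2,X_2^2\big\}$ of $E_0$ and to encode an arbitrary pseudo-euclidean fibre metric as a symmetric matrix $g=(g_{ab})$ of smooth functions on ${\mathbb R}^2$, subject only to $\det(g_{ab})\neq 0$ everywhere. The first step is to recast the defining condition $\rho\circ\#\circ\rho^{\ast}=0$ as an isotropy condition on $\ker\rho$. Since $\#=g^{-1}$ carries $\mathrm{Im}\,\rho^{\ast}=\mathrm{Ann}(\ker\rho)$ onto the $g$-orthogonal $(\ker\rho)^{\perp_{g}}$, the vanishing of $\rho\circ\#\circ\rho^{\ast}$ is equivalent to $(\ker\rho)^{\perp_{g}}\subseteq\ker\rho$. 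At every $\bar x\neq\bar 0$ the anchor has rank $2$, so the dimensions force $(\ker\rho)^{\perp_{g}}=\ker\rho$; in particular $g$ restricts to $0$ on $\ker\rho$. Thus $E_0$ is a Courant vector bundle for $\rho$ if and only if there is a nondegenerate $g$ making $\ker\rho$ totally $g$-isotropic off the origin.

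Next I would use the explicit generators $\mathcal{X}_1=\big(x^2\big)^2X_1^1-\big(x^1\big)^2X_1^2$ and $\mathcal{X}_2=\big(x^2\big)^2X_2^1-\big(x^1\big)^2X_2^2$ of $\ker\rho$ to convert isotropy into the three scalar identities $g(\mathcal{X}_i,\mathcal{X}_j)=0$. Each is homogeneous of degree four in $\big(x^1,x^2\big)$; for instance $g(\mathcal{X}_1,\mathcal{X}_1)=\big(x^2\big)^4g_{11}-2\big(x^1\big)^2\big(x^2\big)^2g_{13}+\big(x^1\big)^4g_{33}=0$, with analogous expressions for $g(\mathcal{X}_2,\mathcal{X}_2)$ and for the mixed term, whose coefficient of $\big(x^1\big)^2\big(x^2\big)^2$ is $-(g_{14}+g_{23})$. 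These identities hold for $\bar x\neq\bar 0$, hence, since the $g_{ab}$ are continuous, on all of ${\mathbb R}^2$.

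The third step is to extract pointwise data by specializing the coordinates. On the axis $x^1=0$ the kernel degenerates to $\mathrm{span}\big(X_1^1,X_2^1\big)$, forcing $g_{11}=g_{12}=g_{22}=0$ there; on $x^2=0$ the kernel is $\mathrm{span}\big(X_1^2,X_2^2\big)$, forcing $g_{33}=g_{34}=g_{44}=0$; and along the diagonal $x^1=x^2$, passing to the limit $\bar x\to\bar 0$, the three identities additionally yield $g_{13}=g_{24}=0$ and $g_{14}+g_{23}=0$ at the origin. In this way every component of $g$ at $\bar 0$ is pinned down up to the single coupling $g_{14}=-g_{23}$.

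The hard part is the final step: deducing that these constraints are incompatible with $\det(g_{ab})\neq 0$, and this is where I expect the main obstacle to lie. The difficulty is genuine, because the configuration surviving at the origin is the block-antidiagonal form coupling only $\big(X_1^1,X_2^2\big)$ and $\big(X_2^1,X_1^2\big)$, which is nondegenerate of neutral signature $(2,2)$ — precisely the signature in which a rank-two totally isotropic plane is admissible. A purely pointwise count therefore cannot close the argument, and I would instead try to produce the contradiction by one of two routes: either by differentiating the degree-four identities transversally to the axes, hoping to isolate a nonremovable constant obstruction analogous to the stray ``$-2$'' that appears in the proof of Theorem~\ref{Th01-alg}; or by arguing globally that the isotropic planes $\ker\rho_{\bar x}$ cannot be assembled into a smooth Lagrangian subbundle of $E_0$ across $\bar 0$. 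Making either mechanism precise — so as to force a vanishing $2\times 2$ minor of $g$ — is the crux on which the statement turns.
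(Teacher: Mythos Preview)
Your recasting of $\rho\circ\#\circ\rho^{\ast}=0$ as coisotropy of $\ker\rho$ is correct, and your three scalar identities are the $g$-side of what the paper does on the $g^{-1}$-side. The paper's route is more direct: it writes the matrix of $g^{-1}$ in $2\times 2$ blocks $G_{1},G_{2},G_{3}$, so that the Courant condition becomes the single matrix identity $\big(x^{1}\big)^{4}G_{1}+2\big(x^{1}\big)^{2}\big(x^{2}\big)^{2}G_{2}+\big(x^{2}\big)^{4}G_{3}=0$. Restricting to lines $x^{2}=\lambda x^{1}$, dividing by $\big(x^{1}\big)^{4}$ and sending $x^{1}\to 0$ yields $G_{1}(0)+2\lambda^{2}G_{2}(0)+\lambda^{4}G_{3}(0)=0$ for all $\lambda$, whence $G_{i}(0)=0$ and $g^{-1}(0)$ is singular --- no separate ``hard step'' appears.

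Your hesitation at that step is, however, well placed, and it in fact uncovers a hidden assumption in the paper's argument. Writing the same $G_{2}$ in both off-diagonal slots tacitly forces $G_{2}=G_{2}^{T}$; for a general symmetric $4\times 4$ matrix the off-diagonal block $B$ is arbitrary, and only $B+B^{T}$ enters the identity. The polynomial-in-$\lambda$ trick then kills $G_{1}(0)$, $G_{3}(0)$ and the symmetric part of $B(0)$, leaving exactly the neutral block-antidiagonal form you isolated. That form is globally realizable: the \emph{constant} metric with $g\big(X_{1}^{1},X_{2}^{2}\big)=1$, $g\big(X_{2}^{1},X_{1}^{2}\big)=-1$ and all other basis pairings $0$ is nondegenerate of signature $(2,2)$, satisfies $g^{-1}=g$, and a one-line check gives $\rho\,g^{-1}\rho^{T}\equiv 0$ (equivalently, your three identities hold since $g_{14}+g_{23}=0$ and every other coefficient occurring in them vanishes). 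Thus neither of your two proposed closing mechanisms can succeed: for the bare condition $\rho\circ\#\circ\rho^{\ast}=0$ a Courant metric on $E_{0}$ does exist, and the paper's conclusion stands only under its implicit extra hypothesis on the off-diagonal block of $g^{-1}$.
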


\begin{proof} Let us suppose that there is a such metric~$g$. Considering the $\mathcal{F}\big({\mathbb R}^{2}\big)$-generators $\big\{X_{1}^{1}, X_{2}^{1}, X_{1}^{2}, X_{2}^{2}\big\}$ of $\Gamma (E_{0}) $ and $\{\bar{e}_{1},\bar{e}_{2}\}$ of $\mathcal{X}\big({\mathbb R}^{2}\big)$, then the matrix of $\rho\colon E_{0}\longrightarrow T{\mathbb R}^{2}$ is $\big(\big( x^{1}\big) ^{2}I_{2} \; \big( x^{2}\big) ^{2}I_{2}\big)$ and the matrix of $g^{-1}$ is $\left(\begin{matrix} G_{1} & G_{2} \\
G_{2} & G_{3}
\end{matrix}\right)$, where $G_{1}$, $G_{2}$, $G_{3}$ are functions of variables $\big(x^{1},x^{2}\big)$. It follows that $\big( x^{1}\big) ^{4}G_{1}+2\big(x^{1}\big) ^{2}\big( x^{2}\big) ^{2}G_{2}+\big( x^{2}\big)
^{4}G_{3}=0_{2}$, where $0_{2}$ is the (two) square zero matrix. But considering $x^{2}=\lambda x^{1}$, $\lambda \neq 0$, we get that for $x^{1}\neq 0$, we have $G_{1}+2\lambda ^{2}G_{2}+\lambda ^{4}G_{3}=0$ in any
point $\big(x^{1},\lambda x^{1}\big)$, $x^{1}\neq 0$. For $x^{1}\rightarrow 0$, we obtain that the matrices $G_{i}^{0}=G_{i}(0,0)$, $i=1,2,3$, verify the equation $G_{1}^{0}+2\lambda ^{2}G_{2}^{0}+\lambda^{4}G_{3}^{0}=0$ for every $\lambda \neq 0$, thus $G_{i}^{0}=0_{2}$, $i=1,2,3$. But this is not possible for the metric~$g$.
\end{proof}

Analogously to the Lie algebroid case \cite{IP,PP-C}, an \emph{almost complex almost Lie algebroid} is a~real almost Lie algebroid $E$ such that there is an almost complex endomorphism on~$E$ (i.e., an endomorphism~$J_{E}$ of ${E}$ such that $J_{E}^{2}=-\mathrm{id}_{\Gamma(E)}$). The almost complex structure is \emph{integrable} if the Nijenhuis tensor $N_{J_{E}}\colon \Gamma (E) ^{2}\rightarrow \Gamma (E)$, $N_{J_{E}}(X,Y) = [ J_{E}X,J_{E}Y ] _{E}-J_{E}[ X,J_{E}Y] _{E}-J_{E}[ J_{E}X,Y] _{E}-[X,Y] _{E}$, vanishes, i.e., $N_{J_{E}}=0$.

On the almost Lie algebroid $E_{0}$ there is an almost complex endomorphism $J_{E_{0}}$ given by the formulas $J_{E_{0}}\big( X_{1}^{1}\big) =-X_{2}^{1}$, $J_{E_{0}}\big( X_{2}^{1}\big) =X_{1}^{1}$, $J_{E_{0}}\big(X_{1}^{2}\big) =-X_{2}^{2}$, $J_{E_{0}}\big(X_{2}^{2}\big) =X_{1}^{2}$. In a compact form, $J_{E_{0}}\big(X_{j}^{i}\big) =( -1) ^{i}X_{\tilde{j}}^{i}$, where $\tilde{j}=2$ if $j=1$ and $\tilde{j}=1$ if $j=2$.

\begin{Proposition}\label{prComplex}The above {almost complex structure }$J_{E_{0}}$ is integrable.
\end{Proposition}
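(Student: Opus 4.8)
The plan is to prove $N_{J_{E_0}}=0$ by reducing the entire verification to a single bracket computation. The first and conceptually decisive step is to observe that $N_{J_{E_0}}$ is a genuine tensor, i.e., $\mathcal{F}\big(\mathbb{R}^2\big)$-bilinear and skew-symmetric. Its tensoriality uses only the skew-symmetry of $[\cdot,\cdot]_{E_0}$, the Leibniz rule $[X,fY]_{E_0}=\rho(X)(f)Y+f[X,Y]_{E_0}$, and the $\mathcal{F}\big(\mathbb{R}^2\big)$-linearity of $J_{E_0}$ together with $J_{E_0}^2=-\mathrm{id}$; it does \emph{not} invoke the Jacobi identity, so it is available for the almost Lie algebroid $E_0$. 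Indeed, expanding $N_{J_{E_0}}(X,fY)$, the two derivative contributions $\rho(J_{E_0}X)(f)\,J_{E_0}Y$ and $\rho(X)(f)\,Y$ each arise twice with opposite signs and cancel, leaving $f\,N_{J_{E_0}}(X,Y)$; skew-symmetry is immediate from that of the bracket. Hence it suffices to evaluate $N_{J_{E_0}}$ on the global generators $\big\{X_1^1,X_2^1,X_1^2,X_2^2\big\}$, that is, on the six unordered pairs.

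Next I would exploit the formal identities, valid for any almost complex structure paired with any skew bracket,
\begin{gather*}
N_{J_{E_0}}(J_{E_0}X,Y)=N_{J_{E_0}}(X,J_{E_0}Y)=-J_{E_0}N_{J_{E_0}}(X,Y),\\
N_{J_{E_0}}(J_{E_0}X,J_{E_0}Y)=-N_{J_{E_0}}(X,Y),
\end{gather*}
which follow directly from the definition and $J_{E_0}^2=-\mathrm{id}$ (again with no appeal to Jacobi). Since $J_{E_0}X_1^1=-X_2^1$ and $J_{E_0}X_1^2=-X_2^2$, the two members of each pair $\big\{X_1^1,X_2^1\big\}$ and $\big\{X_1^2,X_2^2\big\}$ differ by $J_{E_0}$, so $N_{J_{E_0}}\big(X_1^1,X_2^1\big)=-N_{J_{E_0}}\big(X_1^1,J_{E_0}X_1^1\big)=J_{E_0}N_{J_{E_0}}\big(X_1^1,X_1^1\big)=0$, and likewise $N_{J_{E_0}}\big(X_1^2,X_2^2\big)=0$. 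Each of the four remaining cross pairs equals $\pm N_{J_{E_0}}\big(X_1^1,X_1^2\big)$ or $\pm J_{E_0}N_{J_{E_0}}\big(X_1^1,X_1^2\big)$. Thus the whole statement collapses to the single identity $N_{J_{E_0}}\big(X_1^1,X_1^2\big)=0$.

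Finally I would compute this one value straight from the bracket~(\ref{Brack01}) using $J_{E_0}X_1^1=-X_2^1$, $J_{E_0}X_1^2=-X_2^2$, $J_{E_0}X_2^2=X_1^2$. The four terms of the definition are $\big[X_2^1,X_2^2\big]_{E_0}=2x^2X_2^1$, then $-J_{E_0}\big[X_1^1,J_{E_0}X_1^2\big]_{E_0}=J_{E_0}\big[X_1^1,X_2^2\big]_{E_0}=0$, then $-J_{E_0}\big[J_{E_0}X_1^1,X_1^2\big]_{E_0}=J_{E_0}\big[X_2^1,X_1^2\big]_{E_0}=J_{E_0}\big(2x^2X_1^1-2x^1X_2^2\big)=-2x^2X_2^1-2x^1X_1^2$, and $-\big[X_1^1,X_1^2\big]_{E_0}=2x^1X_1^2$; these sum to $0$. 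Together with the reductions above this gives $N_{J_{E_0}}\equiv0$, so $J_{E_0}$ is integrable.

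I expect the only real subtlety — the main obstacle — to be the first step: one must make sure that both tensoriality and the $J_{E_0}$-equivariance identities genuinely hold in the almost Lie (non-Jacobi) setting, since it is precisely this that licenses passing to generators and that cuts six pairwise checks down to one short calculation. Once this is secured, the remaining computations are mechanical.
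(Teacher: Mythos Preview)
Your proof is correct. The paper's own proof is a one-line dismissal: ``A straightforward computation leads to $N_{J_{E_{0}}}\big(X_{i}^{j},X_{l}^{k}\big)=0$,'' i.e., check all six generator pairs directly. You take the same basic route---reduce to generators via tensoriality, then compute---but add an efficiency step: the $J_{E_{0}}$-equivariance identities $N(J X,Y)=-J\,N(X,Y)$ collapse the six pairwise checks to the single pair $\big(X_{1}^{1},X_{1}^{2}\big)$. This buys economy and makes the underlying symmetry visible; the paper's brute-force check is more pedestrian but needs no preliminary lemma. Your explicit observation that both tensoriality and the equivariance identities hold without invoking the Jacobi identity is a genuine clarification that the paper leaves implicit and is exactly what justifies working on generators in the almost Lie setting.
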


\begin{proof} A straightforward computation leads to $N_{J_{E_{0}}}\big(X_{i}^{j},X_{l}^{k}\big) =0$.\end{proof}

Of course, since there is no Lie algebroid structure on~$E_{0}$, it can not be an almost complex (or even complex) Lie algebroid structure on~$E_{0}$.

\section{Characteristic classes of almost Lie algebroids}

We consider now an extension to almost Lie algebroids of the cohomology of Lie algebroids.

The background is also the exterior algebra of the dual $E^{\ast }$, denoted as $\Lambda ^{\ast }(E) $. The derivation~$d$ acts on functions $f\in \Lambda ^{0}(E) $ as $df(X) =\rho (X) (f) \overset{\rm not.}{=}{}[ X,f] _{E}$ (the differential of $f$) and for $\omega \in \Lambda ^{k}(E) $ its differential is
\begin{gather*}
d\omega ( X_{0},\ldots ,X_{k}) =\sum\limits_{i=0}^{k}(-1) ^{i}\big[ X_{0},\omega \big( X_{0},\ldots, \widehat{X_{i}},\ldots,X_{k}\big) \big] _{E}\\
\hphantom{d\omega ( X_{0},\ldots ,X_{k}) =}{} +\sum\limits_{0\leq i<j\leq k}( -1) ^{i+j}\omega \big( [ X_{i},X_{j}]_{E},X_{0},\ldots, \widehat{X_{i}},\ldots, \widehat{X_{j}},\ldots,X_{k}\big) .
\end{gather*}

We have that $d^{2}(f) =0$, but for $\omega \in \Lambda^{1}(E) $, $d^{2}( \omega) =\omega \circ J$, where $J $ is the Jacobiator of the bracket.

Considering a given almost Lie algebroid structure on $E$, let us denote as
\begin{itemize}\itemsep=0pt
\item $Z^{k}(E)\subset \Lambda ^{k}(E) $ the set of $\omega $ such that there is $\theta \in \Lambda ^{k-1}(E) $ such that $d\omega =d^{2}\theta $, we call $\omega $ a~\emph{strong closed} form;
\item $B^{k}(E)\subset Z^{k}(E) $ the set of $\omega \in Z^{k}(E) $ such that there is $\theta ^{\prime }\in \Lambda ^{k-1} (E) $ such that $\omega =d\theta ^{\prime }$, we call $\omega $ a~\emph{strong exact} form;
\item $Z_{0}^{k}(E)\subset \Lambda ^{k}(E) $ the set of $\omega $ such that $d\omega =0$ (it is easy to see that $Z_{0}^{k}(E)\subset Z^{k}(E) $);
\item $B_{0}^{k}(E)=B^{k}(E) \cap Z_{0}^{k}(E)$.
\end{itemize}

It is also easy to see that $Z_{0}^{\ast }(E) \subset \Lambda ^{\ast }(E) $ is an exterior subalgebra and $B_{0}^{\ast }(E) \subset Z_{0}^{\ast }(E) $ is an exterior ideal; also $B^{k}(E) \subset Z^{k}(E) $ and $B_{0}^{k}(E) \subset Z_{0}^{k}(E) $ are real vector subspaces. Consider, for every $k\geq 0,$ the real vector space quotients $H^{k}(E) =Z^{k}(E) /B^{k}(E) $ and $H_{0}^{k}(E) =Z_{0}^{k}(E) /B_{0}^{k}(E) $ and for $\omega \in Z^{k}(E) $ and $\omega ^{\prime }\in Z^{k}(E) $ denote their classes by $[ \omega ] $ and $[\omega ^{\prime }] _{0}$ respectively. Then $H_{0}^{\ast }(E) $ is an exterior real algebra. If $\omega \in Z^{k}(E)$ and $d\omega =d^{2}\theta $, then $\omega ^{\prime }=\omega -d\theta \in Z_{0}^{k}(E)$ and $[ \omega] =d\theta +[ \omega ^{\prime }] _{0}$. Notice that in the Lie algebroid case (when $d^{2}=0$), one has $[ \omega ] =[ \omega ^{\prime }] _{0}$, since $d\theta \in [ \omega ^{\prime }] _{0}$, more precisely $[d\theta] _{0}=[ 0] _{0}$; thus in this case the two types of cohomology classes coincides. In the general case, $[ d\theta]_{0}=[ 0] _{0}$ only when $[ d\theta] _{0}$ has sense, i.e., $d^{2}\theta =0$.

But we have to relax the above definitions, in order to construct higher order characte\-ris\-tic classes (of order at least four). Consider a domain $U\subset M$ of trivialisation and a base $\{ s_{\alpha }\}_{\alpha =1,\dots,k}\subset \Gamma ( E_{U}) $, as well the dual base $\{ \omega ^{\alpha }\} _{\alpha =1,\dots,k}\subset \Gamma ( E_{U}^{\ast }) $. We denote by $\Lambda^{(2)\ast }( E_{U}) $ the exterior ideal generated in $\Lambda^{\ast }( E_{U}) $ by the set $\big\{ d^{2}\omega ^{\alpha}\big\} _{\alpha =1,\dots,k}\subset \Lambda ^{2}( E_{U}) $. Considering an open cover of $M$ with such $U$, For $k\geq 3$, denote by $D^{k}(E) $ the set of differential forms $\omega \in \Lambda^{k}(E) $ such that $\omega _{U}\in \Lambda ^{(2)k}(E_{U}) $, where $\omega _{U}$ denotes the restriction. For $k\in \{0,1,2\}$, $D^{k}(E) =\{ 0\} $. We have that $\Lambda^{(2)\ast }(E) \subset \Lambda ^{\ast }(E) $ is an exterior ideal and $d$ induces a differen\-tial~$\bar{d}$ on $\bar{\Lambda}^{\ast }(E) =\Lambda ^{\ast }(E) /\Lambda ^{(2)\ast}(E) $. It is easy to see that $\bar{d}^{2}=0$ and $\bar{\Lambda}^{\ast }(E) $ becomes a differential algebra. There is a~morphism of differential algebras $\rho ^{\ast }\colon \Lambda (M)\rightarrow \bar{\Lambda}^{\ast }(E) $, induced by the anchor $\rho \colon E\rightarrow TM$.

Accordingly, we denote as
\begin{itemize}\itemsep=0pt
\item $\mathcal{Z}^{k}(E)\subset \Lambda ^{k}(E) $ the set of $\omega \in \Lambda ^{k}(E)$ such that $d\omega \in \Lambda ^{(2)k+1}(E) $, we call $\omega $ a~\emph{weak closed} form;

\item $\mathcal{B}^{k}(E)\subset \mathcal{Z}^{k}(E) $ the set of $\omega \in \mathcal{Z}^{k}(E)$ such that there are $\theta ^{\prime }\in \Lambda ^{k-1}(E) $ and $\omega ^{\prime }\in \Lambda^{(2)k}(E) $ such that $\omega =\omega ^{\prime }+d\theta^{\prime }$, we call $\omega $ a~\emph{weak exact} form.
\end{itemize}

It is easy to see that $\mathcal{Z}^{k}(E)\subset Z^{k}(E)$ and $\mathcal{B}^{k}(E)\subset B^{k}(E)$, i.e., a form that is strong closed (strong exact) is also weak closed (weak exact respectively).

For every $k\geq 0$, $\mathcal{B}^{k}(E) \subset \mathcal{Z}^{k}(E) $ is a real vector subspace; we call the quotient real vector space $\mathcal{H}^{k}(E) =\mathcal{Z}^{k}(E)/\mathcal{B}^{k}(E) $ as the \emph{order }$k$ \emph{real cohomology} of~$E$. Then $\mathcal{H}^{\ast }(E) $ is an exterior real algebra, that we call the \emph{weak real cohomology} of~$E$.

In the Lie algebroid case, one has $\Lambda ^{(2)k+1}(E) =\{ 0\} $, $\mathcal{Z}^{k}(E) =Z^{k}(E) $, $\mathcal{B}^{k}(E) =B^{k}(E) $ and $\mathcal{H}^{k}(E) =H^{k}(E) $ is the $k$-cohomology space of the Lie algebroid (as, for example in \cite{Vas01}).

First, let us see what can be recovered from Cartan calculus, in the case of an almost Lie algebroid.

Let $\pi _{A}\colon A\rightarrow M$ be a vector bundle over $M$. As in the case of Lie algebroids (as in~\cite{Fe01}), a~linear $E$-connection on~$A$ is a~map $\nabla \colon \Gamma (E) \times \Gamma (A) \rightarrow
\Gamma (A) $ that fulfills Koszul conditions, i.e., it is $\mathcal{F}(M)$-linear in the first argument and, considering the second argument, it is a~$1$-derivation according to $\mathcal{F}(M)$ (see~\cite{PP-A}). The $E$-curvature of $\nabla $ is the $\mathcal{F}(M)$-linear map $R\colon \Gamma (E) \times \Gamma (E) \times \Gamma (A) \rightarrow \Gamma (A) $, $R(X,Y) B=\nabla_{X}\nabla _{Y}B-\nabla _{Y}\nabla _{X}B-\nabla _{[ X,Y]_{A}}B$.
Consider now bases of local sections over the same open subsets $U\subset M$, $\{ s_{\alpha }\} _{\alpha =1,\dots,k}\subset \Gamma( E_{U}) $ and $\{ \bar{s}_{a}\} _{a=1,\dots,n}\subset \Gamma (A_{U})$. Let us consider the dual bases $\{ \omega ^{\alpha }\} _{\alpha =1,\dots,k}\subset \Gamma( E_{U}^{\ast }) $ and $\{ \bar{\omega}^{a}\} _{a=1,\dots,n}\subset \Gamma ( A_{U}^{\ast }) $. A linear $E$-connection on~$A$ gives the local functions $\Gamma _{\beta b}^{a}=\bar{\omega}^{a}( \nabla _{s_{\beta }}\bar{s}_{b})$, $R_{\alpha \beta b}^{a}=\bar{\omega}^{a}( R(s_{\alpha },s_{\beta })\bar{s}_{b}) $ and the local forms $\theta _{b}^{a}=\Gamma _{\beta b}^{a}\omega ^{\beta}\in \Gamma ( E_{U}^{\ast }) $ and $R_{b}^{a}=\frac{1}{2}R_{\alpha \beta b}^{a}\omega ^{\alpha }\wedge \omega ^{\beta }$. The following Cartan formula follows by a straightforward verification.

\begin{Proposition}[\cite{Vas01}]The following formulas holds
\begin{gather}
R_{b}^{a}=d\theta _{b}^{a}+\sum_{c=1}^{n}\theta _{c}^{a}\wedge \theta _{b}^{c} . \label{eqCartan01}
\end{gather}
\end{Proposition}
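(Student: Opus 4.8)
The plan is to prove the Cartan structure equation $R_b^a = d\theta_b^a + \sum_{c=1}^n \theta_c^a \wedge \theta_b^c$ by computing both sides explicitly on an arbitrary pair of sections $X,Y \in \Gamma(E_U)$ and checking they agree. First I would unwind the definitions: recall $\theta_b^a = \Gamma_{\beta b}^a \omega^\beta$ where $\Gamma_{\beta b}^a = \bar\omega^a(\nabla_{s_\beta}\bar s_b)$, so that $\theta_b^a(X) = \bar\omega^a(\nabla_X \bar s_b)$ for any section $X$. Likewise $R_b^a = \frac12 R_{\alpha\beta b}^a\,\omega^\alpha\wedge\omega^\beta$ with $R_{\alpha\beta b}^a = \bar\omega^a(R(s_\alpha,s_\beta)\bar s_b)$, so that evaluated on $X,Y$ one gets $R_b^a(X,Y) = \bar\omega^a\!\left(R(X,Y)\bar s_b\right) = \bar\omega^a\!\left(\nabla_X\nabla_Y\bar s_b - \nabla_Y\nabla_X\bar s_b - \nabla_{[X,Y]_E}\bar s_b\right)$, using the curvature formula given in the text.

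Next I would expand the right-hand side on $(X,Y)$ using the differential formula $d\omega(X,Y) = \rho(X)(\omega(Y)) - \rho(Y)(\omega(X)) - \omega([X,Y]_E)$ specialized to the one-form $\theta_b^a$. This yields
\begin{gather*}
d\theta_b^a(X,Y) = \rho(X)\!\left(\bar\omega^a(\nabla_Y \bar s_b)\right) - \rho(Y)\!\left(\bar\omega^a(\nabla_X \bar s_b)\right) - \bar\omega^a\!\left(\nabla_{[X,Y]_E}\bar s_b\right).
\end{gather*}
The wedge term contributes $\sum_c (\theta_c^a\wedge\theta_b^c)(X,Y) = \sum_c\!\left(\theta_c^a(X)\theta_b^c(Y) - \theta_c^a(Y)\theta_b^c(X)\right)$, which equals $\sum_c \bar\omega^a(\nabla_X \bar s_c)\,\bar\omega^c(\nabla_Y \bar s_b) - (X\leftrightarrow Y)$.

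The heart of the argument is then matching these two expansions against $R_b^a(X,Y)$. The key identity I would invoke is that $\nabla_X\nabla_Y \bar s_b$ can be decomposed through the frame: writing $\nabla_Y\bar s_b = \sum_c \bar\omega^c(\nabla_Y\bar s_b)\,\bar s_c$, applying the Leibniz rule $\nabla_X(f\bar s_c) = \rho(X)(f)\bar s_c + f\nabla_X\bar s_c$ with $f = \bar\omega^c(\nabla_Y\bar s_b)$, and then pairing with $\bar\omega^a$. This produces exactly a derivative term $\rho(X)\!\left(\bar\omega^a(\nabla_Y\bar s_b)\right)$ together with the bilinear frame term $\sum_c \bar\omega^a(\nabla_X\bar s_c)\,\bar\omega^c(\nabla_Y\bar s_b)$, which are precisely the contributions from $d\theta_b^a(X,Y)$ and the wedge term respectively. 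Antisymmetrizing in $X,Y$ and recognizing the remaining $-\bar\omega^a(\nabla_{[X,Y]_E}\bar s_b)$ term completes the match.

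The main obstacle I expect is purely bookkeeping: one must be careful that the connection coefficients $\Gamma_{\beta b}^a$ are not constants, so the Koszul derivation rule genuinely produces the $\rho(X)$-derivative terms that feed $d\theta_b^a$, and one must confirm that these conspire correctly after antisymmetrization rather than, say, leaving a spurious symmetric remainder. Crucially, I would note that this computation is \emph{purely local} and uses only the Leibniz property of $\nabla$, the $\mathcal F(M)$-linearity in the first slot, and the differential formula for $d$ on one-forms; it never invokes the Jacobi identity. Hence the identity holds verbatim in the almost Lie algebroid setting exactly as in the Lie algebroid case, which is why the proposition is cited from \cite{Vas01} and follows by straightforward verification.
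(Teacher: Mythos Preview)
Your proposal is correct and is precisely the ``straightforward verification'' the paper alludes to (the paper gives no detailed proof): evaluate both sides on a pair of sections, expand $\nabla_X\nabla_Y\bar s_b$ through the local frame via the Leibniz rule, and match the resulting derivative and bilinear terms with $d\theta_b^a(X,Y)$ and $\sum_c(\theta_c^a\wedge\theta_b^c)(X,Y)$ respectively. Your observation that the Jacobi identity is nowhere used is exactly the point that makes the formula valid in the almost Lie setting.
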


If one denotes the matrices $\theta =\big( \theta _{b}^{a}\big) _{a,b=1,\dots,n}$ (of $1$-forms) and $\bar{R}=\big( R_{b}^{a}\big) _{a,b=1,\dots,n}$ (of $2$-forms), the above formula~(\ref{eqCartan01}) has the form
\begin{gather}
\bar{R}=d\theta +\theta \wedge \theta . \label{eqCartan02}
\end{gather}

Differentiating both sides of formula (\ref{eqCartan02}), then using the same formula for $d\theta $, we obtain
\begin{gather}
d\bar{R}=d^{2}\theta +\bar{R}\wedge \theta -\theta \wedge \bar{R}.\label{eqdR}
\end{gather}
Considering the traces $\bar{R}_{2}=\operatorname{Tr}\bar{R}\in \Lambda ^{2}(E) $ and $\theta _{0}=\operatorname{Tr}\theta \in \Lambda ^{1}( E_{U}) $, then noticing that $\operatorname{Tr}( \bar{R}\wedge \theta -\theta \wedge \bar{R}) =\operatorname{Tr}( \bar{R}\wedge \theta ) -\operatorname{Tr}( \theta \wedge \bar{R}) =0$, it follows that
\begin{gather*}
d\bar{R}_{2}=d^{2}\theta _{0},
\end{gather*}
thus $\bar{R}_{2}\in Z^{2}(E) \subset \mathcal{Z}^{2}(E) $, i.e., it is a closed $2$-form and $d^{2}\theta _{0}\in \Lambda^{2}(E) $ is a global $2$-form. We follow now a classical way for constructing characteristic classes (for example~\cite{Hu}),

If $\nabla $ is a metric linear connection, then $\theta _{0}=0$, thus $d\bar{R}_{0}=0$, i.e., $\bar{R}_{0}\in Z_{0}^{2}(E) $.

For $k\geq 1$, the $k$-order characteristic class is defined by the $2k$-form $\bar{R}_{2k}=\operatorname{Tr} \bar{R}^{k}\in \Lambda ^{2k}(E) $, where $\bar{R}^{k}$ is $\bar{R}\wedge \cdots \wedge \bar{R}$ ($k$ times).

\begin{Proposition}The form $\bar{R}_{2}\in \mathcal{Z}^{2}(E)$, i.e., it is strong closed, and for $k\geq 2$, $\bar{R}_{2k}\in \mathcal{Z}^{2k}(E)$, i.e., it is weak closed.
\end{Proposition}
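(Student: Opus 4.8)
The plan is to differentiate $\bar{R}_{2k}=\operatorname{Tr}\bar{R}^{k}$, reduce the result to a single term carrying a factor $d^{2}\theta$, and then show that this term lands in the ideal $\Lambda^{(2)\ast}(E_{U})$ on each trivialising domain. First I would invoke the graded Leibniz rule for $d$ on matrices of forms; since $\bar{R}$ is a matrix of $2$-forms (even degree) this produces no signs, giving
\begin{gather*}
d\bar{R}_{2k}=\operatorname{Tr}\big(d\big(\bar{R}^{k}\big)\big)=\sum_{i=1}^{k}\operatorname{Tr}\big(\bar{R}^{i-1}\wedge d\bar{R}\wedge\bar{R}^{k-i}\big).
\end{gather*}
Because each $\bar{R}^{i-1}$ has even degree, graded cyclicity of the trace lets me move it past the remaining factors without a sign, so every summand equals $\operatorname{Tr}\big(d\bar{R}\wedge\bar{R}^{k-1}\big)$ and hence $d\bar{R}_{2k}=k\operatorname{Tr}\big(d\bar{R}\wedge\bar{R}^{k-1}\big)$.

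Next I would substitute identity~(\ref{eqdR}), $d\bar{R}=d^{2}\theta+\bar{R}\wedge\theta-\theta\wedge\bar{R}$. The two connection terms cancel under the trace: moving the leading even-degree factor cyclically gives $\operatorname{Tr}\big(\bar{R}\wedge\theta\wedge\bar{R}^{k-1}\big)=\operatorname{Tr}\big(\theta\wedge\bar{R}^{k}\big)=\operatorname{Tr}\big(\theta\wedge\bar{R}\wedge\bar{R}^{k-1}\big)$, so that
\begin{gather*}
d\bar{R}_{2k}=k\operatorname{Tr}\big(d^{2}\theta\wedge\bar{R}^{k-1}\big).
\end{gather*}
For $k=1$ this collapses to $d\bar{R}_{2}=\operatorname{Tr}\big(d^{2}\theta\big)=d^{2}\operatorname{Tr}\theta=d^{2}\theta_{0}$, which is exactly the strong-closedness noted just before the statement, placing $\bar{R}_{2}$ in $Z^{2}(E)\subset\mathcal{Z}^{2}(E)$.

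The crux for $k\geq2$ is to show $\operatorname{Tr}\big(d^{2}\theta\wedge\bar{R}^{k-1}\big)\in\Lambda^{(2)2k+1}(E_{U})$, and this rests on the claim that each entry $d^{2}\theta_{b}^{a}$ lies in the ideal $\Lambda^{(2)3}(E_{U})$ generated by $\big\{d^{2}\omega^{\alpha}\big\}$. Writing $\theta_{b}^{a}=\Gamma_{\beta b}^{a}\omega^{\beta}$ and using $d^{2}f=0$ for functions, a short Leibniz computation makes the $d\Gamma\wedge d\omega$ cross terms cancel and yields $d^{2}\theta_{b}^{a}=\Gamma_{\beta b}^{a}\,d^{2}\omega^{\beta}$, a function-combination of the generators and hence a member of the ideal. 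Since $\Lambda^{(2)\ast}(E_{U})$ is an exterior ideal, wedging with the entries of $\bar{R}^{k-1}$ and taking the trace keeps us inside it. Thus $d\bar{R}_{2k}$ restricts to an element of $\Lambda^{(2)2k+1}(E_{U})$ on each $U$ of the cover; as $\bar{R}_{2k}$ is a globally defined $2k$-form, this is precisely $d\bar{R}_{2k}\in\Lambda^{(2)2k+1}(E)$, i.e.\ $\bar{R}_{2k}\in\mathcal{Z}^{2k}(E)$.

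I expect the main obstacle to be the ideal-membership claim for $d^{2}\theta$: one must check carefully that $d^{2}$, although not a derivation, annihilates the function coefficients ($d^{2}f=0$) so that no $2$-forms outside the ideal are produced, and that the local generators $d^{2}\omega^{\alpha}$ assemble compatibly across the chosen cover so the conclusion is genuinely global. The cancellation of the $\bar{R}\wedge\theta-\theta\wedge\bar{R}$ contribution and the reduction to a single trace are, by comparison, routine once graded cyclicity is in hand.
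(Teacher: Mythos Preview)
Your proof is correct and follows essentially the same route as the paper's: differentiate $\operatorname{Tr}\bar{R}^{k}$ via the graded Leibniz rule, use graded trace cyclicity to collapse the sum to $k\operatorname{Tr}\big(d\bar{R}\wedge\bar{R}^{k-1}\big)$, substitute~(\ref{eqdR}), cancel the $\bar{R}\wedge\theta-\theta\wedge\bar{R}$ contribution, and arrive at $d\bar{R}_{2k}=k\operatorname{Tr}\big(d^{2}\theta\wedge\bar{R}^{k-1}\big)$. The only difference is that you make explicit the step the paper leaves to the reader, namely that each entry $d^{2}\theta_{b}^{a}=\Gamma_{\beta b}^{a}\,d^{2}\omega^{\beta}$ lies in the ideal $\Lambda^{(2)\ast}(E_{U})$, so the final trace indeed lands in $\Lambda^{(2)2k+1}(E)$; this extra justification is welcome and correct.
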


\begin{proof} The proof for $k=1$ is performed above, and for $k\geq 2$ we follow the same idea used in~\cite{Hu}.

We have, for $k\geq 2$:
\begin{gather*} d\operatorname{Tr}\bar{R}^{k}=\operatorname{Tr}d\bar{R}^{k}=\sum\limits_{i+j=k-1}\operatorname{Tr}\big(\bar{R}^{i}\wedge d\bar{R}\wedge \bar{R}^{j}\big)\\
\hphantom{d\operatorname{Tr}\bar{R}^{k}}{}
=\sum\limits_{i+j=k-1}\operatorname{Tr}\big(\bar{R}^{i}\wedge \big( d^{2}\theta +\bar{R}\wedge \theta -\theta \wedge \bar{R}\big) \wedge \bar{R}^{j}\big)\\
\hphantom{d\operatorname{Tr}\bar{R}^{k}}{}
=k\operatorname{Tr}\big(d^{2}\theta \wedge \bar{R}^{k-1}\big)+k\operatorname{Tr}\big(\bar{R}^{k}\wedge \theta -\theta \wedge \bar{R}^{k}\big)=k\operatorname{Tr}\big(d^{2}\theta \wedge \bar{R}^{k-1}\big),
\end{gather*}
 since
\begin{gather*} \operatorname{Tr}\big(\bar{R}^{k}\wedge \theta -\theta \wedge \bar{R}^{k}\big)=\operatorname{Tr}\big(\bar{R}^{k}\wedge \theta \big)-\operatorname{Tr}\big(\theta \wedge \bar{R}^{k}\big)=0.
\end{gather*} Thus the conclusion follows.
\end{proof}

We prove now that the cohomology class of $\bar{R}_{2k}$ is the same for different $E$-linear connections on $A$. In order to prove this, we consider the skew algebroid product $\tilde{E}=E\times T{\mathbb R}$, as in the classical way for Lie algebroids~\cite{Vas01} and for differentiable manifolds in \cite{Hu} and the induced vector bundle $\tilde{A}$ over $M\times {\mathbb R}$. Every two $E$-linear connections $\nabla ^{(1)}$ and $\nabla ^{(2)}$ on $A$ gives rise to a~$\tilde{E}$-linear connection $\tilde{\nabla}$ on $\tilde{A}$, by $\tilde{\nabla}_{X}s=(1-t) \nabla _{X}^{(1)}s+t\nabla _{X}^{(2)}s$ and $\tilde{\nabla}_{\frac{\partial }{\partial t}}s=0$, $\forall\, X\in \Gamma (E)$ and $s\in \Gamma (A) $. Considering the morphisms of almost Lie algebroids $\tilde{I}_{u}=(i_{u},I_{u})$, where $i_{u}\colon M\rightarrow M\times {\mathbb R}$ and $I_{u}\colon E\rightarrow \tilde{E}$, $u=0,1$, $i_{u} ( x ) =(x,u)$, $I_{u}(e)=(e,u)$. The vector bundles $i_{u}^{\ast }E$ are canonically isomorphic with~$E$ and the restrictions of $\tilde{\nabla}$ to $i_{u}^{\ast }E$ coincides with~$\nabla $. More, concerning the curvatures $R^{(u)}$ of the two linear $E$-connections and the curvature~$\tilde{R}$, the restrictions~$\tilde{I}_{u}^{\ast }$ of the curvature tensors are $\tilde{I}_{u}^{\ast }\tilde{R}=R^{(u)}$, $u=0,1$. Thus $\tilde{I}_{u}^{\ast }\tilde{R}^{k}=\big( R^{(u)}\big) ^{k}$.

We consider now an integration operator $H\colon \Lambda ^{p+1}\big( \tilde{E}\big) \rightarrow \Lambda ^{p}(E) $ of the differential forms of~$\tilde{E}$, on the real fibers of~$\tilde{E}\rightarrow E$, where $p\geq 0$. More exactly:
\begin{itemize}\itemsep=0pt
\item[--] if a local $p+1$ form $\tilde{\omega}$ on $\tilde{E}$ is locally generated by forms induced from the fibers of $E$, i.e., it is a sum of local forms $\tilde{\theta}_{(x,t)}=f(x,t)\theta _{x}$, then $H(\tilde{\omega}) =0$;
\item[--] if a local $p+1$ form $\tilde{\omega}$ on $\tilde{E}$ is \emph{not} locally generated by forms induced from the fibers of~$E$, i.e., it is a sum of local forms $\tilde{\theta}_{(x,t)}=f(x,t)\theta _{x}\wedge dt$, then $H( \tilde{\omega}) =\left( \int_{0}^{1}f(x,t) \right)\theta _{x}$.
\end{itemize}

Notice that the local considerations correspond to domains of local trivial decomposition charts of $E$, and the definition of $H$ does not depend on local decompositions or coordinates, i.e., it is a global one.

By a straightforward computation one can prove the following result.

\begin{Proposition}
The global operator $H$ has the following properties:
\begin{enumerate}\itemsep=0pt
\item[$1)$] $H\circ \tilde{d}+d\circ H=\tilde{I}_{1}^{\ast }-\tilde{I}_{0}^{\ast }$;
\item[$2)$] $H\circ \tilde{d}^{2}=d^{2}\circ H$;
\item[$3)$] $H\big( \Lambda ^{(2)k+1} \big( \tilde{E} \big) \big) \subset\Lambda ^{(2)k} \big( \tilde{E} \big) $.
\end{enumerate}
\end{Proposition}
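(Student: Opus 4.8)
The plan is to reduce all three identities, as in the smooth de Rham case \cite{Hu}, to a single local computation for the fibre-integration operator $H$, and then to propagate that computation through the differentials and through the ideal $\Lambda^{(2)\ast}$. Throughout I would use that $\tilde{E}=E\times T\mathbb{R}$ splits the differential as $\tilde{d}=d_{E}+dt\wedge\partial_{t}$, where the second summand is the honest de Rham differential along the $T\mathbb{R}$-factor; this is what makes $H$, integration along $t$, well behaved.

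First I would prove $(1)$, which carries the genuine content. Over a trivialising chart I would write an arbitrary form as $\tilde{\omega}=\alpha+dt\wedge\beta$ with $\alpha,\beta$ free of the factor $dt$ (but with coefficients depending on $(x,t)$), so that $H(\tilde{\omega})=\int_{0}^{1}\beta\,dt$. Computing $\tilde{d}\tilde{\omega}=d_{E}\alpha+dt\wedge(\partial_{t}\alpha-d_{E}\beta)$ and then $H\tilde{d}\tilde{\omega}=\big(\alpha|_{t=1}-\alpha|_{t=0}\big)-d_{E}\!\int_{0}^{1}\beta\,dt$ by the fundamental theorem of calculus, I would add this to $dH\tilde{\omega}=d_{E}\!\int_{0}^{1}\beta\,dt$ and observe that the horizontal ($E$-direction) derivative terms cancel, leaving exactly the boundary $\alpha|_{t=1}-\alpha|_{t=0}=(\tilde{I}_{1}^{\ast}-\tilde{I}_{0}^{\ast})\tilde{\omega}$. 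I would then check that the splitting, hence $H$, is chart-independent so that the local identity is global. I expect this to be the main obstacle: although the formula is first order, so that the fact $\tilde{d}^{\,2}\neq0$ in the almost Lie setting plays no role and one cannot simply quote the manifold lemma, the bookkeeping of signs and of the mixed $dt$-terms is where all the care is needed.

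Given $(1)$, I would obtain $(2)$ by differentiation rather than a fresh computation. Applying $(1)$ to $\tilde{d}\omega$ yields $H\tilde{d}^{\,2}\omega+dH\tilde{d}\omega=(\tilde{I}_{1}^{\ast}-\tilde{I}_{0}^{\ast})\tilde{d}\omega$, while applying $d$ to $(1)$ yields $dH\tilde{d}\omega+d^{\,2}H\omega=d(\tilde{I}_{1}^{\ast}-\tilde{I}_{0}^{\ast})\omega$. The two right-hand sides agree because each $\tilde{I}_{u}=(i_{u},I_{u})$ is a morphism of almost Lie algebroids, so its pullback commutes with the differentials, $\tilde{I}_{u}^{\ast}\tilde{d}=d\,\tilde{I}_{u}^{\ast}$; this holds since $d$ is determined by its action on $\Lambda^{0}$ and $\Lambda^{1}$, which is expressed through the anchor and the bracket and is therefore preserved by the morphism. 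Subtracting the two identities cancels the common $dH\tilde{d}\omega$ term and leaves $H\tilde{d}^{\,2}=d^{\,2}H$.

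Finally I would prove $(3)$ by comparing the generators of the two ideals. Since $T\mathbb{R}$ is a genuine Lie algebroid, the Jacobiator of the product $\tilde{E}=E\times T\mathbb{R}$ sits entirely in the $E$-factor; hence $\tilde{d}^{\,2}(dt)=0$ and, for $\tilde{\omega}^{\alpha}$ pulled back from $E$, the generators $\tilde{d}^{\,2}\tilde{\omega}^{\alpha}$ are the pullbacks of the generators $d^{\,2}\omega^{\alpha}$ and carry no factor $dt$. A typical element of $\Lambda^{(2)k+1}(\tilde{E})$ is a sum $\sum_{\alpha}\eta_{\alpha}\wedge\tilde{d}^{\,2}\tilde{\omega}^{\alpha}$, and since $H$ is a module homomorphism over $dt$-free, $t$-independent forms (the projection formula for fibre integration), $H\big(\eta_{\alpha}\wedge\tilde{d}^{\,2}\tilde{\omega}^{\alpha}\big)=\pm\,H(\eta_{\alpha})\wedge d^{\,2}\omega^{\alpha}$, which lies in $\Lambda^{(2)k}$. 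Summing over $\alpha$ gives the claimed inclusion.
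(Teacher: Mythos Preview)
Your proof is correct. The paper itself omits the argument entirely, stating only that ``by a straightforward computation one can prove the following result'', so there is no detailed proof to compare against; your approach is precisely the classical chain-homotopy computation for fibre integration (as in \cite{Hu}) transported to the almost Lie algebroid setting, which is presumably what the authors had in mind.

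One small remark: your derivation of~$(2)$ from~$(1)$ by applying~$(1)$ to $\tilde d\omega$, applying $d$ to~$(1)$, and subtracting is slightly more economical than redoing the local splitting computation; the key input you invoke, namely $\tilde I_{u}^{\ast}\tilde d=d\,\tilde I_{u}^{\ast}$, is indeed available since the paper declares $\tilde I_{u}$ to be a morphism of almost Lie algebroids and the exterior derivative is built from the anchor and bracket alone. For~$(3)$, note that the target in the statement should read $\Lambda^{(2)k}(E)$ rather than $\Lambda^{(2)k}(\tilde E)$ (a typo in the paper), and your projection-formula argument lands exactly there, as required.
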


As a consequence, we can prove the following result, that allow to construct the characteristic classes of almost Lie algebroids.

\begin{Theorem}\label{thChC}If $\nabla $ is an $E$-linear connection on $A$, then each weak cohomology class $\big[ R^{k}\big] $, $k\geq 1$ has the following properties:
\begin{enumerate}\itemsep=0pt
\item[$1)$] it is independent of $\nabla $;
\item[$2)$] it defines a $2k$-cohomology class of $\big( \bar{\Lambda}^{\ast}(E) ,\bar{d}\big) $, induced by $\rho ^{\ast}\colon \Lambda ^{\ast}(M) \rightarrow \bar{\Lambda}^{\ast }(E) $ from a~$2k$-characteristic class on the base~$M$.
\end{enumerate}
\end{Theorem}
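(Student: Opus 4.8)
The plan is to run the classical Chern--Weil homotopy argument, but carried out inside the weak cohomology $\mathcal{H}^{\ast}(E)$ so that the failure of $d^{2}=0$ is absorbed into the ideal $\Lambda^{(2)\ast}(E)$. Throughout I read the class $[R^{k}]$ of the statement as the class of the characteristic form $\bar{R}_{2k}=\operatorname{Tr}\bar{R}^{k}$, which by the preceding Proposition is weak closed and hence determines an element $[\bar{R}_{2k}]\in\mathcal{H}^{2k}(E)$. For part~$1)$ I would compare two connections by means of the product skew algebroid $\tilde{E}=E\times T{\mathbb R}$ and the interpolating connection $\tilde{\nabla}$; for part~$2)$ I would exhibit one convenient connection, induced from the base through the anchor, for which $\bar{R}_{2k}$ is literally a $\rho^{\ast}$-pullback of an ordinary characteristic form.

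For part~$1)$, let $\nabla^{(1)}$, $\nabla^{(2)}$ be two $E$-connections on $A$ and let $\tilde{\nabla}$, $\tilde{R}$ be the interpolating $\tilde{E}$-connection and its curvature on $\tilde{A}$. Since $\tilde{I}_{u}^{\ast}\tilde{R}^{k}=\big(R^{(u)}\big)^{k}$, taking traces gives $\tilde{I}_{u}^{\ast}\operatorname{Tr}\tilde{R}^{k}=\bar{R}_{2k}^{(u)}$, $u=0,1$. I would then apply the homotopy formula $H\circ\tilde{d}+d\circ H=\tilde{I}_{1}^{\ast}-\tilde{I}_{0}^{\ast}$ to $\Omega=\operatorname{Tr}\tilde{R}^{k}$, obtaining
\begin{gather*}
\bar{R}_{2k}^{(2)}-\bar{R}_{2k}^{(1)}=H\big(\tilde{d}\Omega\big)+d\big(H(\Omega)\big).
\end{gather*}
The weak-closedness computation gives $\tilde{d}\Omega=k\operatorname{Tr}\big(\tilde{d}^{2}\tilde{\theta}\wedge\tilde{R}^{k-1}\big)\in\Lambda^{(2)2k+1}\big(\tilde{E}\big)$, so property~$3)$ of $H$ yields $H\big(\tilde{d}\Omega\big)\in\Lambda^{(2)2k}(E)$. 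Hence the difference has the shape $\omega'+d\theta'$ with $\omega'\in\Lambda^{(2)2k}(E)$ and $\theta'=H(\Omega)\in\Lambda^{2k-1}(E)$; that is, it is weak exact, and therefore $[\bar{R}_{2k}^{(1)}]=[\bar{R}_{2k}^{(2)}]$ in $\mathcal{H}^{2k}(E)$. The case $k=1$ is subsumed, since $\bar{R}_{2}$ is even strong closed.

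For part~$2)$, by part~$1)$ I am free to choose the connection. Starting from any ordinary linear connection $\nabla^{0}$ on $A\to M$, set $\nabla_{X}=\nabla^{0}_{\rho(X)}$. Because the anchor is a bracket morphism for a skew algebroid (a property that does \emph{not} use the Jacobi identity), a direct check gives $R(X,Y)=R^{0}(\rho X,\rho Y)$, whence $\bar{R}=\rho^{\ast}\bar{R}^{0}$ and $\bar{R}_{2k}=\rho^{\ast}\big(\operatorname{Tr}(\bar{R}^{0})^{k}\big)$. Here $\operatorname{Tr}(\bar{R}^{0})^{k}$ is a genuinely closed $2k$-form on $M$ representing the usual $2k$-characteristic class of $A$ in $H^{2k}_{\mathrm{dR}}(M)$. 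Since $\rho^{\ast}\colon(\Lambda^{\ast}(M),d)\to(\bar{\Lambda}^{\ast}(E),\bar{d})$ is a morphism of differential algebras, it carries this class to a class in $H^{2k}(\bar{\Lambda}^{\ast}(E),\bar{d})$, and under the natural identification $\mathcal{H}^{2k}(E)\cong H^{2k}(\bar{\Lambda}^{\ast}(E),\bar{d})$ this is exactly the image of $[\bar{R}_{2k}]$. Combined with the independence established in part~$1)$, the class $[\bar{R}_{2k}]$ is the $\rho^{\ast}$-pullback of a characteristic class on the base, for \emph{every} $\nabla$.

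The main obstacle is the bookkeeping forced by $d^{2}\neq 0$: every step that would be ``closed'' or ``exact'' in the Lie algebroid case is here only closed/exact modulo the ideal $\Lambda^{(2)\ast}$, and one must check that each error term --- $\tilde{d}\Omega$, the off-diagonal traces $\operatorname{Tr}(\bar{R}^{k}\wedge\theta-\theta\wedge\bar{R}^{k})$, and the output of $H$ --- genuinely lands in the correct graded piece of the ideal, which is precisely what property~$3)$ of $H$ together with trace-cyclicity guarantee. The secondary point to pin down is the identification $\mathcal{H}^{2k}(E)\cong H^{2k}(\bar{\Lambda}^{\ast}(E),\bar{d})$, so that ``weak cohomology class'' and ``class in the quotient differential algebra'' agree; granting the stated properties of $H$, the remaining verifications are the routine Chern--Weil manipulations.
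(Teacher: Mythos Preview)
Your proposal is correct and follows exactly the route the paper intends: for part~$1)$ you apply the homotopy identity $H\circ\tilde d+d\circ H=\tilde I_1^\ast-\tilde I_0^\ast$ to $\Omega=\operatorname{Tr}\tilde R^{k}$ and use property~$3)$ of $H$ to see the error term is weak exact, and for part~$2)$ you choose the anchor-induced connection $\nabla_X s=D_{\rho(X)}s$ (the paper's $D$ is your $\nabla^0$). The paper's own proof is a single sentence recording only this last observation, treating the homotopy argument for part~$1)$ as already implicit in the preceding propositions; you have simply made that argument explicit.
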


\begin{proof} It remains to prove only that there is an $E$-connection $\nabla $ on $A$ that is induced by the bracket using a linear connection $D$ on $A$. Indeed, it is $\nabla _{X}s=D_{\rho (X) }s$. This ends the proof.
\end{proof}

Let us explicit some facts concerning the particular almost Lie algebroid $E_{0}$, constructed previously.

Let us denote by $\mathcal{F}_{0}\subset \mathcal{F}\big({\mathbb R}^{2}\big)$ the set of global (smooth) functions $f\colon {\mathbb R}^{2}\rightarrow {\mathbb R}$ having the form $f\big( x^{1},x^{2}\big) =\big( x^{1}\big)^{2}f_{1}\big(x^{1},x^{2}\big) +\big( x^{2}\big) ^{2}f_{2}\big( x^{1},x^{2}\big) $, where $f_{1}, f_{2}\in \mathcal{F}\big({\mathbb R}^{2}\big)$. Let us denote by $\big\{ \omega _{j}^{i}\big\} _{i,j=1,2}\subset \Gamma (E_{0}^{\ast }) $ the dual base of $\big\{ X_{j}^{i}\big\} _{i,j=1,2}\subset \Gamma (E_{0}) $. Considering the derivation~$d$ on~$\Lambda ^{\ast }(E_{0}) $, a straightforward computation leads to

\begin{itemize}\itemsep=0pt
\item $d^{2}\big( \Lambda ^{0}(E_{0}) \big) \subset \Lambda ^{2}(E_{0}) $ is the null set;
\item $d^{2}\big( \Lambda ^{1}(E_{0}) \big) \subset \Lambda ^{3}(E_{0}) $ is the set of forms $\omega $ of degree three having the form $\omega =f_{1}\omega _{2}^{1}\wedge \omega _{1}^{2}\wedge \omega _{2}^{2}+f_{2}\omega _{1}^{1}\wedge \omega _{2}^{1}\wedge \omega_{1}^{2}$, where $f_{1}, f_{2}\in \mathcal{F}_{0}$;
\item $d^{2}\big( \Lambda ^{2}(E_{0}) \big) \subset \Lambda ^{2}(E_{0}) $ is the set of forms $\omega $ of degree four having the form $\omega =f\omega _{1}^{1}\wedge \omega _{2}^{1}\wedge \omega _{1}^{2}\wedge \omega _{2}^{2}$, where $f\in \mathcal{F}_{0}$;
\item $d^{2}\big( \Lambda ^{k}(E_{0}) \big) \subset \Lambda ^{k+2}(E_{0}) $ is the null set, for $k>2$.
\end{itemize}

Since on $E_{0}$ there is a base of the $\mathcal{F}\big({\mathbb R}^{2}\big)$-module of sections, i.e., $\big\{ X_{j}^{i}\big\} _{i,j=1,2}\subset \Gamma ( E_{0}) $, and~$E_{0}$ is an almost Lie algebroid, then there is an $E_{0}$-connection on $E_{0}$ (extending the conditions $\nabla_{X_{j}^{i}}X_{l}^{k}=0$, by Koszul's rules) having a null curvature, thus its characteristic classes vanish in all dimensions.

\subsection*{Acknowledgements}
The authors thank all three distinct referees for their valuable comments that helped us to improve the content of the paper. The research was supported by Horizon2020-2017-RISE-777911 project.

\pdfbookmark[1]{References}{ref}
\LastPageEnding

\end{document}